\newcommand{\con}{\operatorname{Con}}
\newcommand{\non}{\operatorname{Non}}
\newcommand{\lin}{\operatorname{Lin}}
\newcommand{\var}{\operatorname{var}}
\newtheorem{theorem}{Theorem}[section]
\newtheorem{cor}[theorem]{Corollary}
\newtheorem{fact}[theorem]{Fact}
\newtheorem{lemma}[theorem]{Lemma}
\newtheorem{sufcon}{Sufficient Condition}
\newtheorem{claim}{Claim}
\begin{document}

\title{Lee monoid  $L_4^1$ is non-finitely based}
\author{ Inna A. Mikhailova\footnote{email: inna.mikhaylova@gmail.com, Ural Federal University, Ekaterinburg, Russia The first author was supported by the Russian Foundation for Basic Research, project
no.\ 17-01-00551, the Ministry of Education and Science
of the Russian Federation, project no.\ 1.3253.2017, and
the Competitiveness Program of Ural Federal University.
}\quad and Olga B. Sapir\footnote{email: olga.sapir@gmail.com}\\}
\date{}
\maketitle

\begin{abstract} We establish a new sufficient condition under which a monoid is non-finitely based and apply this condition to show that
the 9-element monoid  $L_4^1$ is non-finitely based. The monoid  $L_4^1$ was the only unsolved case in the finite basis problem for
 Lee monoids $L_\ell^1$, obtained by adjoining an identity element to the semigroup $L_\ell$ generated by two idempotents $a$ and $b$ subjected to the relation $0=abab \cdots$ (length $\ell$). We also prove a syntactic sufficient condition which is equivalent to the sufficient condition of Lee under which a semigroup is non-finitely based. This gives a new proof to the results of Zhang-Luo and Lee that the semigroup $L_\ell$ is non-finitely based for  each $\ell \ge 3$.

\vskip 0.1in

\noindent{\bf 2010 Mathematics subject classification}: 20M07, 08B05

\noindent{\bf Keywords and phrases}: Lee monoids, identity, finite basis problem, non-finitely based, variety, isoterm
\end{abstract}

\section{Introduction}

An algebra  is said to be {\em finitely based} (FB) if there is a finite subset of its identities from which all of its identities may be deduced.
Otherwise, an algebra is said to be {\em non-finitely based} (NFB). By the celebrated McKenzie's result  \cite{RM}  the classes of FB and NFB finite algebras are recursively inseparable. It is still unknown whether the set of FB finite semigroups is recursive although a very large volume of work is devoted to this problem (see
the survey \cite{MV}).

Recently,  Lee suggested to investigate the finite basis property of the semigroups
\[
L_\ell = \langle a,b \mid aa=a, bb=b, \underbrace{ababab\cdots}_{\text{length }\ell} = 0 \rangle, \quad \ell \geq 2
\]
and the monoids $L_\ell^1$ obtained by adjoining an identity element to $L_\ell$.

The 4-element semigroup $L_2 =A_0$ is  long known to be finitely based \cite{1980}.
Zhang and  Luo proved \cite{WTZ1} that the 6-element semigroup $L_3$ is NFB
and Lee generalized this
result into a sufficient condition  \cite{EL} which implies that for all $\ell \geq 3$, the semigroup $L_\ell$ is NFB  \cite{EL1}.

 As for the monoids $L_\ell^1$, the 5-element monoid $L_2^1$ was also proved to be FB by Edmunds \cite{1977}, while the 7-element monoid $L_3^1$ is recently shown to be NFB by Zhang \cite{WTZ}.  It is proved in \cite{OS} that for each $\ell \ge 5$ the monoid $L_\ell^1$ is NFB. The goal of this article is to prove that  $L_4^1$ is NFB.

To this aim we establish a new sufficient condition under which a monoid is NFB.
Throughout this article, elements of a countably infinite alphabet $\mathfrak A$ are called {\em variables} and elements of the free monoid $\mathfrak A^*$  and free semigroup $\mathfrak A^+$ are called {\em words}.
We say that a word $\bf u$ has {\em the same type} as $\bf v$ if $\bf u$ can be obtained from $\bf v$ by changing the individual exponents of variables. For example, the words $x^2yxzx^5y^2xzx^3$ and
 $xy^2x^3zxyx^2zx$  are of the same type.

An {\em island} formed by a variable $x$  in a word {\bf u} is a maximal subword of $\bf u$ which is a power of $x$.  For example, the word $xyyx^5yx^3$ has  three islands formed by $x$ and two islands formed by $y$.
 We use $x^+$ to denote $x^n$ when $n$ is a positive integer and its exact value is unimportant.
If $\bf u$ is a word over a two-letter alphabet then the {\em height} of $\bf u$ is the number of islands in $\bf u$. For example,
the word $x^+$ has height 1,  $x^+y^+$ has height 2,   $x^+y^+x^+$ has height 3, and so on.
For each $\ell \ge 2$ consider the following property of a semigroup $S$.

$\bullet$ (C$_\ell$) If the height of ${\bf u} \in \{x,y\}^+$ is at most $\ell$, then
$\bf u$ can form an identity of $S$ only with a word of the same type.

 We use $\var S$ to refer to the variety of semigroups generated by $S$.
The following result from \cite{OS} gives us a connection between Lee semigroups, Lee monoids and Properties   (C$_\ell$).

\begin{fact} \label{final} \cite[Corollary 7.2]{OS} Let $\ell \ge 2$ and $S$ be a semigroup (resp. monoid).
Then $S$ satisfies Property (C$_\ell$) if and only if  $\var S$ contains  $L_{\ell}$ (resp. $L_{\ell}^1$).
\end{fact}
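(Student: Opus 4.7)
The plan rests on the precise description of $L_\ell$: its nonzero elements are the alternating words in $\{a,b\}^+$, and such a word represents $0$ in $L_\ell$ exactly when its length is $\ge \ell+1$, or its length equals $\ell$ and it starts with $a$. Both directions of the equivalence reduce to this classification together with a lifting construction from $\mathfrak A$ into $\{x,y\}^+$.

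For the direction $\var S \supseteq L_\ell \Rightarrow$ (C$_\ell$), it suffices to verify (C$_\ell$) for $L_\ell$ itself, since every identity of $S$ is then an identity of $L_\ell$. Given ${\bf u} \in \{x,y\}^+$ of height $h \le \ell$ with $L_\ell \models {\bf u} = {\bf v}$, I apply the two substitutions $\varphi_1\colon x \mapsto a,\, y \mapsto b$ and $\varphi_2\colon x \mapsto b,\, y \mapsto a$. Each sends $\bf u$ to an alternating word of length $h$, and by the classification at least one of $\varphi_1({\bf u}), \varphi_2({\bf u})$ is a specific nonzero alternating word — both are nonzero when $h < \ell$, and when $h = \ell$ exactly one is nonzero, depending on whether the first island of $\bf u$ is $x$ or $y$. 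A nonzero $\varphi_i({\bf u})$ pins down the entire type of $\bf u$, and since $\varphi_i({\bf v}) = \varphi_i({\bf u})$, the type of $\bf v$ is forced to coincide. The monoid case follows because $L_\ell \subseteq L_\ell^1$.

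For the converse, I argue contrapositively. Given $\varphi\colon \mathfrak A \to L_\ell$ with $\varphi({\bf u}) \ne \varphi({\bf v})$, I define a lift $\sigma\colon \mathfrak A \to \{x,y\}^+$ by setting $\sigma(z)$ equal to the alternating word obtained from $\varphi(z)$ via $a \mapsto x,\, b \mapsto y$ when $\varphi(z) \ne 0$, and equal to a sufficiently long alternating word in $\{x,y\}^+$ (of length $\ell+3$, say) when $\varphi(z) = 0$. Then $\sigma({\bf w})|_{x \to a,\, y \to b}$ reduces in $L_\ell$ to $\varphi({\bf w})$, so the height of $\sigma({\bf w})$ equals the length of $\varphi({\bf w})$ when the latter is nonzero (hence $\le \ell$) and is $\ge \ell$ otherwise. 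A short case analysis on which of $\varphi({\bf u}), \varphi({\bf v})$ is zero — using the first-letter asymmetry to settle the sub-case where both heights equal $\ell$ — shows that $\sigma({\bf u})$ and $\sigma({\bf v})$ have different types while at least one has height $\le \ell$, violating (C$_\ell$) applied to $S \models \sigma({\bf u}) = \sigma({\bf v})$. The monoid case is handled analogously by allowing $\varphi(z) = 1,\, \sigma(z)$ empty.

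The main obstacle is the boundary behaviour at height $\ell$: in $(\Leftarrow)$, the substitution $\varphi_1$ destroys type information for height-$\ell$ words starting with $x$, so the argument must leverage $\varphi_2$ and the asymmetry of the length-$\ell$ relation; in $(\Rightarrow)$, the choice of $\sigma(z)$ when $\varphi(z) = 0$ must be long enough to push the height of $\sigma({\bf u})$ past $\ell$ in most sub-cases and to force the required first-island mismatch in the remaining ones.
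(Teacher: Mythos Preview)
The paper does not prove this statement at all: Fact~\ref{final} is quoted from \cite[Corollary~7.2]{OS} without argument, so there is no in-paper proof to compare against. Your plan is therefore being assessed on its own merits.

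Your outline is sound. The key structural insight---that the nonzero elements of $L_\ell$ are precisely the alternating words of length $<\ell$ together with the single alternating word of length $\ell$ starting with $b$---is correct, and both directions reduce cleanly to it. In $(\Leftarrow)$ the two substitutions $\varphi_1,\varphi_2$ do exactly what you claim: for height $h<\ell$ either one determines the type of ${\bf v}$, while for $h=\ell$ the asymmetry forces you to the substitution sending the first island to $b$, and that one still pins down the type. In $(\Rightarrow)$ the lift $\sigma$ satisfies $\psi\circ\sigma=\varphi$ for $\psi\colon x\mapsto a,\;y\mapsto b$, so height of $\sigma({\bf w})$ agrees with the length of $\varphi({\bf w})$ whenever the latter is nonzero, and your case split goes through; the delicate sub-case (both heights equal $\ell$, one value zero and one nonzero) is resolved exactly by the first-letter asymmetry, since the nonzero side must start with $y$ and the zero side with $x$.

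Two small points you should make explicit when writing this out. First, in $(\Leftarrow)$ you silently assume $\con({\bf v})\subseteq\{x,y\}$; this is immediate (substitute any foreign variable to $b$ and everything else to $a$, contradicting (C$_1$)), but it should be said. Second, for the monoid direction of $(\Rightarrow)$, the boundary case $\varphi({\bf u})=1\ne\varphi({\bf v})$ cannot actually occur once you know $M\models{\bf u}\approx{\bf v}$: Property~(C$_2$) already forces $\con({\bf u})=\con({\bf v})$, so if every variable of ${\bf u}$ is sent to $1$ then so is every variable of~${\bf v}$. With these two remarks your argument is complete.
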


In view of Fact \ref{final}, the sufficient condition of Lee \cite{EL} (see Fact \ref{EL61} below) is equivalent to the following sufficient condition.

\begin{sufcon} \label{EL} (cf. Theorem \ref{EL62})
 Let $S$ be a semigroup that satisfies Property (C$_3$) and $k \ge 2$.
If for each $n \ge 2$, $S$ satisfies the identity

\begin{equation} \label{eL30}
x^ky^k_1y^k_2 \dots y^k_nx^k  \approx  x^ky^k_ny^k_{n-1} \dots y^k_1x^k\end{equation}
 then $S$ is NFB.
\end{sufcon}

Note that every monoid that satisfies \eqref{eL30} violates  Property (C$_2$). Therefore, Sufficient Condition \ref{EL} cannot be used to establish the non-finite basis property of any monoid. Theorem 2.7 in \cite{OS1} implies the result of Zhang  \cite{WTZ} that $L_3^1$ is NFB and can be reformulated as follows.

\begin{sufcon} \label{L3} \cite{OS1}
 Let $M$ be a monoid that satisfies Property (C$_3$).
If for each $n>0$, $M$ satisfies the identity
\begin{equation} \label{eL3}
(x_1 x_2 \dots x_{n-1}x_{n}) (y_1 y_2 \dots y_{n-1}y_n) (x_{n} x_{n-1} \dots x_2 x_1)  (y_n y_{n-1} \dots y_{2}y_1) \approx
\end{equation}
\[ (y_1 y_2 \dots y_{n-1}y_n) (x_1 x_2 \dots x_{n-1}x_{n})  (y_n y_{n-1} \dots y_{2}y_1) (x_{n} x_{n-1} \dots x_2 x_1),\]

then $M$ is NFB.
\end{sufcon}

Note that for $n=1$ the identity \eqref{eL3} fails on  $L_4^1$ and consequently on  $L_\ell^1$ for each $\ell \ge 4$.
Let $\pi$ denote the special permutation on $\{1, 2, \dots, n^2\}$ used by Jackson to prove Lemma 5.4 in \cite{MJ}.
The next theorem implies that  for each $\ell \ge 5$ the monoid $L_\ell^1$ is NFB \cite{OS}.

\begin{sufcon} \label{L5} \cite[Theorem 2.1]{OS}  Let $M$ be a monoid that satisfies Property (C$_5$).
If for each $n>3$, $M$ satisfies the identity
\begin{equation} \label{eL5}
(x_1 x_2 \dots x_{n^2-1}x_{n^2}) \hskip.04in  (x^k_{\pi1} x^k_{\pi2} \dots x^k_{\pi n^2})  \hskip.04in (x_{n^2} x_{n^2-1} \dots x_2 x_1) \approx
\end{equation}
\[(x_1 x_2 \dots x_{n^2-1}x_{n^2}) \hskip.04in  (x^k_{\pi n^2} \dots x^k_{\pi2} x^k_{\pi1} )     \hskip.04in ( x_{n^2} x_{n^2-1} \dots x_2 x_1)\]  for some $k \ge 1$, then  $M$ is NFB.

\end{sufcon}

Since $L_4^1$ satisfies $xyxyyx \approx xyxyxy$, it does not satisfy Property (C$_5$). Therefore, Sufficient Condition \ref{L5} cannot be used to establish the non-finite basis property of $L_4^1$.
The following theorem gives us a new sufficient condition under which a monoid is NFB  and will be proved in Section \ref{sec:thm}.

\begin{theorem} \label{main}

 Let $M$ be a monoid that satisfies Property (C$_4$).
If for each $n>0$, $M$ satisfies the identity
 \[{\bf U}_n = (x_1 x_2 \dots x_n)  (x_n x_{n-1} \dots x_1)  (x_1 x_2 \dots x_n)   \approx (x_1 x_2 \dots x_n)  (x_1^2 x_2^2 \dots x_n^2)  = {\bf V}_n,\]
then $M$ is NFB.
\end{theorem}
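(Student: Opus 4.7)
The plan is to prove the theorem by contradiction, following the template of the proofs of Sufficient Conditions \ref{EL}, \ref{L3}, and \ref{L5}. Suppose $M$ has a finite identity basis $\Sigma$, and let $d$ be the maximum number of distinct variables appearing in any identity in $\Sigma$. Fix $n$ much larger than $d$. Because the identity $\mathbf{U}_n \approx \mathbf{V}_n$ holds in $M$, it must be derivable from $\Sigma$ through a finite chain of substitution/replacement steps $\mathbf{U}_n = \mathbf{W}_0 \approx \mathbf{W}_1 \approx \cdots \approx \mathbf{W}_k = \mathbf{V}_n$. The goal is to show that Property (C$_4$) forbids such a chain.

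The structural feature that distinguishes $\mathbf{U}_n$ from $\mathbf{V}_n$ is the order of the ``middle'' occurrences of $x_1, \dots, x_n$: reversed in $\mathbf{U}_n$ but left-to-right in $\mathbf{V}_n$. Observe that for any pair $\{x_i, x_j\}$ with $i < j$, the two-variable projection of $\mathbf{U}_n$ is $x_i x_j^2 x_i^2 x_j$ and of $\mathbf{V}_n$ is $x_i x_j x_i^2 x_j^2$; both have height exactly $4$ and the same type, so Property (C$_4$) does not by itself rule out their equivalence, but it tightly controls what any identity of $M$ of height at most $4$ in two variables may do. I would set up a combinatorial invariant of intermediate words --- either a record of the relative order of middle occurrences of those $x_i$ lying outside the scope of the substituted identity, or an aggregate over all pairs $\{x_i, x_j\}$ of which of the two two-variable shapes the projection exhibits --- and show that any single derivation step alters this invariant only locally. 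Because $n > d$, every single substitution instance of a $\Sigma$-identity involves at most $d$ of the variables $x_1, \dots, x_n$, so a large block of indices always lies outside the scope of the step and its contribution to the invariant is untouched; for variables inside the scope, Property (C$_4$) combined with the fact that $\Sigma$-identities hold in $M$ should restrict how their two-variable projections can change.

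The main obstacle will be executing the case analysis showing that no single substitution instance of a $\Sigma$-identity can coordinate the reversal of middle-occurrence order across the $n > d$ variables. The substitution may identify variables, insert long factor words, or act nontrivially on many positions at once, so one must carefully decompose the effect of each step on the three-block shape $(x_1 \cdots x_n)(\text{middle})(\text{tail})$ of $\mathbf{W}_i$ and track how the invariant propagates, in particular ruling out the scenario where the collapse of distinct $x_i$'s to a common image under the substitution hides a reordering. The analogous arguments in \cite{EL,OS,OS1} suggest that the right tactic is to reduce each step to an instance of Property (C$_4$) applied to an appropriate two-variable subword of height at most $4$, so that the type-preservation afforded by (C$_4$) forces the invariant to be globally preserved. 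Once this is established, $\mathbf{U}_n$ and $\mathbf{V}_n$ are seen to have incompatible invariants, so no derivation of $\mathbf{U}_n \approx \mathbf{V}_n$ from $\Sigma$ can exist, contradicting the assumption that $\Sigma$ is a basis for the identities of $M$.
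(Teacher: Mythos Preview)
Your high-level framework is the same as the paper's (and indeed is what Lemma~\ref{nfblemma} packages): show that every single derivation step preserves a certain invariant that distinguishes $\mathbf{U}_n$ from $\mathbf{V}_n$. The gap is in your proposed execution. You suggest reducing each step to an application of Property~(C$_4$) to a two-variable subword, but two-variable information is not enough to carry the argument. You yourself observe that for $i<j$ the projections $\mathbf{U}_n(x_i,x_j)$ and $\mathbf{V}_n(x_i,x_j)$ have the \emph{same} type $x_i^+x_j^+x_i^+x_j^+$; so the invariant ``type of every two-variable projection'' does not separate $\mathbf{U}_n$ from $\mathbf{V}_n$, and your alternative of tracking the actual exponent pattern in each two-variable projection is not preserved by (C$_4$), which only controls types.

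What the paper actually uses as invariant is the \emph{type of the whole word}, and the point of Lemma~\ref{3let} is that two words have the same type if and only if all their \emph{three}-variable projections do. Distinguishing $\mathbf{U}_n$ from $\mathbf{V}_n$ thus requires three-letter information: for $i<j<k$ one has $\mathbf{U}_n(x_i,x_j,x_k)=x_i^+x_j^+x_k^+x_j^+x_i^+x_j^+x_k^+$ while $\mathbf{V}_n(x_i,x_j,x_k)=x_i^+x_j^+x_k^+x_i^+x_j^+x_k^+$, and these are not of the same type even though every two-variable restriction is. The substantial work, which your outline does not anticipate, is Lemma~\ref{manylet}: it isolates structural Conditions (I)--(IV) on a word $\mathbf{u}$ (in particular constraints on how blocks can delete to three-variable patterns like $x^+y^+x^+z^+x^+$ or $x^+y^+z^+x^+$) that force $\mathbf{u}$ to be a $\tau$-term for any monoid with (C$_4$). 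One must then verify (Lemmas~\ref{propii}--\ref{prop2}) that any $\mathbf{u}$ with fewer than $n-1$ variables mapping onto a word of the same type as $\mathbf{U}_n$ satisfies these conditions. The phenomenon you flag---a substitution identifying several $x_i$'s and thereby hiding a reordering---is handled not by a two-variable argument but by the $E(\mathbf{u})$ construction of Fact~\ref{Eu} together with Lemma~\ref{redef}, and the three-variable block analysis is where the real case work lives.
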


If $\tau$ is an equivalence relation on the free semigroup $\mathfrak A^+$ then we say that a word ${\bf u}$ is  a {\em $\tau$-term} for a semigroup $S$ if ${\bf u} \tau {\bf v}$ whenever $S$ satisfies ${\bf u} \approx {\bf v}$. Recall \cite{P} that $\bf u$ is an {\em isoterm}  for $S$ if ${\bf u} = {\bf v}$ whenever $S$ satisfies ${\bf u} \approx {\bf v}$. If $\bf u$ is an isoterm for $S$ then evidently, $\bf u$ is a $\tau$-term for $S$ for every equivalence relation $\tau$ on $\mathfrak A^+$.
It is shown in \cite{OS} that for $\ell \le 5$  the isoterms for $L_\ell^1$ carry no information about the non-finite basis property of $L_\ell^1$.
However, the non-finite basis property of Lee semigroups $L_\ell$ and Lee monoids $L^1_\ell$ for $\ell \ge 3$ can be established by analyzing $\tau$-terms, where $\tau$ is the equivalence relation on $\mathfrak A^+$ defined by ${\bf u} \tau {\bf v}$  if  ${\bf u}$ and $ {\bf v}$ are of the same type.

In particular, Sufficient Condition \ref{L5} is proved in \cite{OS} by  analyzing $\tau$-terms for monoids for which all words in $\{x,y\}^+$ of height at most $5$ are $\tau$-terms.
Likewise, we prove Theorem \ref{main}  by analyzing $\tau$-terms for monoids for which all words in $\{x,y\}^+$ of height at most $4$ are $\tau$-terms.
 In Section \ref{sect:sufconLee}, we prove  Sufficient Condition \ref{EL}  by analyzing $\tau$-terms for semigroups for which all words in $\{x,y\}^+$ of height at most $3$ are $\tau$-terms.

\section{Lee monoids  $L^1_\ell$ are NFB for all $\ell >2$}

We use $\con({\bf u})$ to denote the set of all variables contained in a word ${\bf u}$. Theorem \ref{main} implies the following.

\begin{cor} \label{L41}  The monoid $L_4^1 =  \langle a,b,1 \mid aa=a, bb=b, abab=0 \rangle$  is NFB.
\end{cor}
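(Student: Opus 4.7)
The plan is to apply Theorem~\ref{main} to $M = L_4^1$; this reduces Corollary~\ref{L41} to verifying the two hypotheses of that theorem, namely that $L_4^1$ satisfies Property (C$_4$), and that for each $n \ge 1$, $L_4^1$ satisfies the identity ${\bf U}_n \approx {\bf V}_n$.

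The first hypothesis is immediate. By Fact~\ref{final}, applied with $\ell = 4$ and $S = L_4^1$, the monoid $L_4^1$ satisfies Property (C$_4$) if and only if $L_4^1 \in \var(L_4^1)$, which is trivial.

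The bulk of the work lies in verifying the second hypothesis. The case $n = 1$ is vacuous, since ${\bf U}_1 = {\bf V}_1 = x_1^3$ as words. For $n \ge 2$, I would verify the identity by direct evaluation on $L_4^1$ under an arbitrary substitution $\varphi \colon x_i \mapsto e_i \in L_4^1$. The relevant structural facts about $L_4^1$ are standard: its non-identity elements form the set $\{0, a, b, ab, ba, aba, bab, baba\}$, and the product of nonzero, non-identity elements is computed by concatenating their alternating-word representations over $\{a,b\}$, collapsing runs of consecutive equal letters, and identifying the result with $0$ precisely when it contains $abab$ as a contiguous subword. After discarding trivial cases (substitutions with some $e_i = 0$ make both sides equal to $0$; occurrences of $e_i = 1$ can be removed), the verification reduces to the following combinatorial claim about strings over $\{a,b\}$: for any words $w_1, \dots, w_n \in \{a,b\}^+$, the two concatenations
\[W = (w_1 w_2 \cdots w_n)(w_n w_{n-1} \cdots w_1)(w_1 w_2 \cdots w_n) \quad\text{and}\quad W' = (w_1 w_2 \cdots w_n)(w_1^2 w_2^2 \cdots w_n^2)\]
either reduce (by collapsing runs of equal letters) to the same alternating word, or both reduce to alternating words containing $abab$ as a contiguous subword.

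The main obstacle is this final combinatorial verification. I would proceed by case analysis on the first and last letters $s_i, t_i$ of each $w_i$, together with the internal alternation of $w_i$ as a string. The key bookkeeping point is that $W$ and $W'$ contain each $w_i$ exactly three times, share the initial letter $s_1$ and terminal letter $t_n$, and have the same multiset of within-$w_i$ transitions; only the junction transitions differ. In ``non-degenerate'' configurations (those where the pattern of $s_i$'s and $t_i$'s produces sufficient alternation between consecutive $w_i$'s), both $W$ and $W'$ accumulate at least four transitions, forcing $abab$ to appear in both reductions and yielding $0$ on both sides of the identity. In the remaining degenerate configurations---essentially those where all the $w_i$ are powers of a single letter, or their boundary letters line up in very restricted patterns so that $\sum_i(|w_i|-1)$ is small---one directly enumerates the possibilities, checking in each case that both $W$ and $W'$ reduce to the same short element of $L_4^1$ (one of $a$, $b$, $ab$, $ba$, $aba$, $bab$, or $baba$).
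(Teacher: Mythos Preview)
Your approach is the same as the paper's: apply Theorem~\ref{main}, invoke Fact~\ref{final} for Property~(C$_4$), and verify ${\bf U}_n \approx {\bf V}_n$ directly in $L_4^1$. The difference lies only in how you carry out that last verification.

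Your proposed case analysis on arbitrary $w_i \in \{a,b\}^+$ is workable but much heavier than necessary. The paper short-circuits it with one observation: each variable $x_i$ occurs at least three times in both ${\bf U}_n$ and ${\bf V}_n$, so if any $\Theta(x_i)$ contains both generators $a$ and $b$, then both $\Theta({\bf U}_n)$ and $\Theta({\bf V}_n)$ are already $0$. This immediately reduces to the case $\Theta(x_i)\in\{a,b,1\}$ for all $i$. After dropping the $1$'s, the word $\Theta(x_1\cdots x_n)$ reduces to a single alternating word over $\{a,b\}$; if it contains $ab$ as a factor then both sides contain $abab$ and vanish, and the only remaining case is $\Theta(x_1\cdots x_n)=b^+a^+$, where one checks directly that both sides equal $baba$. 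So the elaborate bookkeeping on first/last letters and junction transitions is unnecessary once you exploit the three-occurrences observation.
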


\begin{proof}  In view of Fact \ref{final}, it is enough to verify that
 $L_4^1$ satisfies the identity ${\bf U}_n \approx {\bf V}_n$ for each  $n>0$.

 Indeed, first notice that each variable appears at least $3$ times in ${\bf U}_n$ and ${\bf V}_n$.
Fix some substitution $\Theta: \mathfrak A \rightarrow L^1_4$. If for some $1 \le i \le n$, the set  $\con(\Theta(x_i))$ contains both $a$ and $b$ then both  $\Theta({\bf U}_n)$ and $\Theta({\bf V}_n)$ contain $(ab)^{3}$ or $(ba)^{3}$ as a subword and consequently, both are equal to zero. Therefore, we may assume that for each $1 \le i \le n$ we have $\Theta(x_i) \in \{a,b,1\}$.
To avoid some trivial cases we may also assume that $\Theta (x_1 x_2 \dots x_{n-1}x_{n})$ contains both letters $a$ and $b$. Consider two cases.

{\bf Case 1}: $\Theta (x_1 x_2 \dots x_{n-1}x_{n})$ contains $ab$ as a subword.

In this case, both $\Theta({\bf U}_n)$  and  $\Theta({\bf V}_n)$ contain $abab$ as a subword and consequently,
both are equal to zero.

{\bf Case 2}: $\Theta (x_1 x_2 \dots x_{n-1}x_{n}) = ba$.

In this case,  $\Theta({\bf U}_n) = (ba) (ab) (ba) = ba b a = \Theta({\bf V}_n)$.
\end{proof}

Notice that  ${\bf U}_3 \approx {\bf V}_3$ fails on $L_5^1$. Indeed, substitute $x_1 \rightarrow b$,  $x_2 \rightarrow a$, $x_3 \rightarrow b$.

\begin{theorem}  Lee monoid $L_\ell^1$ is FB if and only if $\ell=2$.

\end{theorem}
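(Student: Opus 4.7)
The plan is to assemble four known pieces, since each value of $\ell$ has already been handled (or is handled by the corollary immediately preceding this theorem). The theorem is essentially a consolidation statement, so the proof I would write is short and mostly a pointer to earlier results, rather than any new argument.

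First I would dispose of the ``if'' direction: for $\ell = 2$, the monoid $L_2^1$ is the $5$-element monoid whose finite basis was exhibited by Edmunds in \cite{1977}, so we simply cite that. This handles the entire forward implication.

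For the ``only if'' direction I would argue the contrapositive by splitting on $\ell$. The case $\ell = 3$ is Zhang's result from \cite{WTZ}. The case $\ell = 4$ is Corollary \ref{L41}, which was the main point of the present paper and was proved immediately above via Theorem \ref{main} together with a direct verification that $L_4^1$ satisfies ${\bf U}_n \approx {\bf V}_n$. The remaining case $\ell \ge 5$ is established in \cite{OS} by applying Sufficient Condition \ref{L5} (the Jackson-style permutation identity \eqref{eL5}) to $L_\ell^1$, which satisfies Property (C$_5$) in this range by Fact \ref{final}. Putting these three cases together gives that $L_\ell^1$ is NFB whenever $\ell \ne 2$, which is what we need.

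There is no real obstacle here: all the work has been done elsewhere. The only point worth noting for the reader is why a single sufficient condition does not cover the full range $\ell \ge 3$. Namely, Sufficient Condition \ref{L3} does not apply for $\ell \ge 4$ (the identity \eqref{eL3} fails on $L_4^1$ already at $n=1$), Sufficient Condition \ref{L5} does not apply for $\ell = 4$ (because $L_4^1$ violates Property (C$_5$), as witnessed by the identity $xyxyyx \approx xyxyxy$), and the newly introduced Theorem \ref{main} does not apply for $\ell \ge 5$ (because ${\bf U}_3 \approx {\bf V}_3$ fails on $L_5^1$, as noted right after Corollary \ref{L41}). Thus each of the three NFB ranges genuinely requires a different sufficient condition, and the proof of the theorem is simply the union of these three applications together with Edmunds' finite basis for $L_2^1$.
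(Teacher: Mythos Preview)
Your proposal is correct and matches the paper's own proof essentially line for line: both cite Edmunds \cite{1977} for $\ell=2$, Zhang \cite{WTZ} for $\ell=3$, Corollary~\ref{L41} for $\ell=4$, and \cite{OS} for $\ell\ge 5$. Your additional paragraph explaining why no single sufficient condition covers all $\ell\ge 3$ is helpful context but not part of the paper's proof itself.
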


\begin{proof}
The 5-element monoid $L_2^1$ was proved to be FB by C. Edmunds \cite{1977}. The 7-element monoid $L_3^1$ is  NFB by the result of W. Zhang \cite{WTZ}. The 9-element monoid $L_4^1$ is NFB by Corollary \ref{L41}.
 For each $\ell \ge 5$ the monoid $L_\ell^1$ is NFB by the result of the second-named author \cite{OS}.
\end{proof}

\section{Identities of monoids that satisfy Property (C$_4$)}

 If a variable $t$ occurs exactly once in a word ${\bf u}$ then we say that $t$ is {\em linear} in ${\bf u}$. If a variable $x$ occurs more than once in  ${\bf u}$ then we say that $x$ is {\em non-linear} in ${\bf u}$.
 Evidently, $\con({\bf u}) = \lin({\bf u}) \cup \non({\bf u})$ where $\lin({\bf u})$ is the set of all linear variables in $\bf u$ and
$\non({\bf u})$ is the set of all non-linear variables in $\bf u$.
A {\em block} of $\bf u$ is a maximal subword of $\bf u$ that does not contain any linear variables of $\bf u$.

\begin{fact} \label{basic} \cite[Lemma 3.4]{OS} Let $M$ be a monoid that satisfies Property (C$_3$).
If $M \models {\bf u} \approx {\bf v}$ then

(i) $\lin({\bf u}) = \lin({\bf v})$,  $\non({\bf u}) = \non({\bf v})$  and the order of occurrences of linear letters in $\bf v$  is the same as in $\bf u$.

(ii) The corresponding blocks in $\bf u$ and $\bf v$ have the same content. In other words, if
\[{\bf u} = {\bf a}_0t_1 {\bf a}_1t_2 \dots t_{m-1}{\bf a}_{m-1} t_m {\bf a}_m,\] where $\non({\bf u}) = \con({\bf a}_0 {\bf a}_1 \dots {\bf a}_{m-1}{\bf a}_m)$ and  $\lin({\bf u}) = \{ t_1, \dots, t_m\}$, then
\[{\bf v} = {\bf b}_0t_1 {\bf b}_1t_2 \dots t_{m-1}{\bf  b}_{m-1} t_m {\bf b}_m\]
such that $\con({\bf a}_q) = \con({\bf b}_q)$ for each $0 \le q \le m$.

\end{fact}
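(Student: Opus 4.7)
The plan is to repeatedly apply Property~(C$_3$) through carefully chosen substitutions into $\{x, y, 1\}$; the key tool is that any identity ${\bf u}' \approx {\bf v}'$ of $M$ with ${\bf u}' \in \{x,y\}^+$ of height at most~$3$ forces ${\bf v}'$ to be of the same type.

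For part~(i), to pin down the linear variables I would, for each $t \in \lin({\bf u})$, substitute $t \mapsto x$ and every other variable of $\con({\bf u})$ to $y$; then $\Theta({\bf u}) \in \{x,\ y^+x,\ xy^+,\ y^+xy^+\}$ has height at most~$3$, so (C$_3$) forces $\Theta({\bf v})$ to be of the same type, i.e.\ $t$ is linear in~${\bf v}$. To fix the order of two linear variables $s, t$ with $s$ before $t$ in~${\bf u}$, I would substitute $s \mapsto x$, $t \mapsto y$ and all other letters to~$1$, obtaining $\Theta({\bf u}) = xy$; since $xy$ and $yx$ are not of the same type, (C$_3$) excludes $\Theta({\bf v}) = yx$. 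Finally, for $x \in \con({\bf u})$, substituting $x \mapsto x$ and all other letters to~$1$ gives $M \models x^n \approx x^m$; Fact~\ref{final} ensures $L_3^1 \in \var M$, and since $a^n = a \ne 1$ in $L_3^1$ we have $M \not\models x^n \approx 1$, forcing $m \ge 1$. This proves $\con({\bf u}) = \con({\bf v})$ and, together with the identification of linear letters, $\non({\bf u}) = \non({\bf v})$.

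For part~(ii), having written ${\bf u}$ and ${\bf v}$ in the displayed forms with matching $t_1, \dots, t_m$, I would fix $q$ and a non-linear variable $x$, write $c_r$ and $c_r'$ for the number of occurrences of $x$ in ${\bf a}_r$ and ${\bf b}_r$, and substitute $x \mapsto x$, $t_q \mapsto y$, $t_{q+1} \mapsto y$, and all other variables to~$1$ (with obvious modifications at the endpoints $q = 0$ or $q = m$). Then $\Theta({\bf u}) = x^A y x^{c_q} y x^B$ with $A = c_0 + \cdots + c_{q-1}$ and $B = c_{q+1} + \cdots + c_m$, and analogously for~${\bf v}$.

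The hard part is that in general $\Theta({\bf u})$ has height up to~$5$, so (C$_3$) does not apply directly. The crucial observation is that precisely when $c_q = 0$ the two images of the separators fuse into a single $y^2$-island, dropping the height to at most~$3$. Property~(C$_3$) then forces $\Theta({\bf v})$ to be of the same type, which is impossible unless $c_q' = 0$ as well. Running this for every~$q$ (and swapping the roles of ${\bf u}$ and ${\bf v}$) yields $c_q = 0 \iff c_q' = 0$, i.e.\ $x \in \con({\bf a}_q) \iff x \in \con({\bf b}_q)$, which is exactly~(ii).
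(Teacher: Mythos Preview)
The paper does not give its own proof of this Fact; it is quoted verbatim from \cite[Lemma~3.4]{OS}. So there is nothing to compare against, and I will just assess your argument.

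Your treatment of part~(ii) is correct and quite clean: the ``fuse the separators into a single $y^2$-island'' trick brings the height down to at most~$3$ exactly when $c_q=0$, and Property~(C$_3$) then forces $c_q'=0$; symmetry finishes.

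There is, however, a genuine gap in part~(i). With your substitution $t\mapsto x$, all other letters $\mapsto y$, you correctly get $\Theta({\bf u})$ of type $y^+xy^+$ (or a boundary variant), and (C$_3$) forces $\Theta({\bf v})$ to be \emph{of the same type}. But ``same type'' only means $\Theta({\bf v})=y^{a}x^{b}y^{c}$ for some $b\ge1$; it does not force $b=1$. So your argument shows merely that all occurrences of $t$ in ${\bf v}$ form a single island, not that $t$ is linear there. This is not a vacuous worry: $L_3^1$ itself satisfies $x^2\approx x^3$, so (C$_3$) alone cannot distinguish exponents within an island via substitutions into $\{x,y,1\}$. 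The standard fix is to send $t\mapsto xy$ (and all other letters to $y$): then $\Theta({\bf u})$ still has height at most~$3$, while if $t$ occurred $k\ge2$ times in ${\bf v}$ the word $\Theta({\bf v})$ would contain at least two $x$-islands and hence have height at least~$4$, contradicting~(C$_3$). Alternatively you can invoke $L_3^1\in\var M$ (via Fact~\ref{final}) and check directly that $L_3^1$ separates $t$ linear from $t$ non-linear; but some extra step beyond your current substitution is needed.
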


If $\con({\bf u}) \supseteq \{x_1, \dots, x_n\}$ we write ${\bf u}(x_1, \dots, x_n)$ to refer to the word obtained from ${\bf u}$ by deleting all occurrences of all variables that are not in $\{x_1, \dots, x_n\}$ and say that $\bf u$ {\em deletes} to  ${\bf u}(x_1, \dots, x_n)$.

\begin{lemma} \label{2let} \cite[Lemma 3.6]{OS} Let $M$ be a monoid that satisfies Property (C$_4$).
Let ${\bf u}$  be a word with  $\non({\bf u}) =  \{x,y\}$ such that

(i)  ${\bf u}(x,y)$ has height at most $4$;

(ii) every block of $\bf u$  has  height  at most $3$.

Then $\bf u$  can form an identity of $M$ only with a word of the same type.
\end{lemma}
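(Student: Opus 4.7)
The plan is to assume $M \models {\bf u} \approx {\bf v}$ and deduce that ${\bf u}$ and ${\bf v}$ are of the same type. Because Property~(C$_4$) implies Property~(C$_3$), Fact~\ref{basic} applies and yields a matching block decomposition ${\bf v} = {\bf b}_0 t_1 {\bf b}_1 \dots t_m {\bf b}_m$ for ${\bf u} = {\bf a}_0 t_1 {\bf a}_1 \dots t_m {\bf a}_m$, with $\con({\bf a}_q) = \con({\bf b}_q) \subseteq \{x,y\}$ for every $q$. Since $M$ is a monoid, the substitution sending every linear variable to $1$ produces $M \models {\bf u}(x,y) \approx {\bf v}(x,y)$; by hypothesis~(i) the word ${\bf u}(x,y)$ has height at most $4$, so Property~(C$_4$) forces ${\bf u}(x,y)$ and ${\bf v}(x,y)$ to share the same island sequence in $\{x,y\}^+$.

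Next I would reduce the remaining task to a blockwise statement: it suffices to show that ${\bf a}_q$ and ${\bf b}_q$ are of the same type for every $q$. Since they have equal content and each has height at most $3$ by hypothesis~(ii), the only non-trivial case is $\con({\bf a}_q)=\{x,y\}$; then each block's type lies in $\{xy,\,yx,\,xyx,\,yxy\}$ and is uniquely determined by the ordered pair (first island variable, last island variable). So the problem reduces to showing that, for each such block, the first and last island variables of ${\bf a}_q$ and of ${\bf b}_q$ coincide.

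To establish this, I would argue by contradiction using substitutions $\Theta$ mapping each linear variable $t_i$ to one of $1$, $x$ or $y$. Each such $\Theta$ yields $M\models\Theta({\bf u})\approx\Theta({\bf v})$ with both sides in $\{x,y\}^+$, and whenever $\Theta({\bf u})$ has height at most $4$ Property~(C$_4$) forces $\Theta({\bf v})$ to be of the same type as $\Theta({\bf u})$. Assuming that the types of ${\bf a}_q$ and ${\bf b}_q$ disagree, I would send most linear variables to $1$ but choose $\Theta(t_q)$ (or $\Theta(t_{q+1})$) equal to the letter that matches the last island variable of ${\bf a}_{q-1}$ (or the first island variable of ${\bf a}_{q+1}$). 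The inserted letter is then absorbed into an existing island on the ${\bf u}$-side, so $\Theta({\bf u})$ still has height at most $4$, while, because ${\bf b}_q$ has a different first/last island, the same insertion creates a fresh island on the ${\bf v}$-side. Thus $\Theta({\bf v})$ has strictly larger height than $\Theta({\bf u})$, contradicting Property~(C$_4$).

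The main obstacle is the bookkeeping in this last step: one must verify, over the six possible pairs of distinct block types and over the possible first/last-letter configurations of the adjacent blocks (including the case where an adjacent block is empty or consists of a single island), that a substitution with the required merging property really exists. This verification is delicate precisely when ${\bf u}(x,y)$ already has height $4$, because then a careless choice of $\Theta(t_q)\in\{x,y\}$ can push the height of $\Theta({\bf u})$ to $5$ and place us outside the reach of Property~(C$_4$). The merging trick described above avoids this pitfall, but its correctness in every configuration has to be checked case by case.
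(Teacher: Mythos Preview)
The paper does not prove this lemma; it is quoted verbatim from \cite[Lemma~3.6]{OS} and no argument is supplied here. So there is no in-paper proof to compare your attempt against.

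As for your outline itself: the overall strategy---use Fact~\ref{basic} to align the block decompositions, delete all linear letters to force ${\bf u}(x,y)$ and ${\bf v}(x,y)$ to have the same type via Property~(C$_4$), and then separate any remaining block-type discrepancy by substitutions sending a single linear letter into $\{x,y\}$---is sound and is essentially the kind of argument used in \cite{OS}. One point deserves care in the case analysis you left open. Your merging trick picks $\Theta(t_q)$ equal to the last island variable of ${\bf a}_{q-1}$, which keeps the height of $\Theta({\bf u})$ at most~$4$; but to obtain a contradiction you need this letter \emph{not} to merge on the ${\bf v}$-side. Since at that stage you have not yet established that ${\bf a}_{q-1}$ and ${\bf b}_{q-1}$ are of the same type, the last island of ${\bf b}_{q-1}$ could in principle be the very letter you inserted, absorbing it on the ${\bf v}$-side as well. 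A clean way around this is to run the argument inductively from the left (and, dually, from the right): once ${\bf a}_0,\dots,{\bf a}_{q-1}$ are already known to agree in type with ${\bf b}_0,\dots,{\bf b}_{q-1}$, the last island variables at position $q-1$ coincide, and your merging substitution then genuinely distinguishes ${\bf a}_q$ from ${\bf b}_q$.
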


We use $D_x({\bf u})$ to denote the result of deleting all occurrences of variable $x$ in a word $\bf u$.
The next lemma is a special case of Lemma \ref{manylet}.

\begin{lemma} \label{3letter} Let $M$ be a monoid that satisfies Property (C$_4$).
Let ${\bf u}$  be a word with 3 non-linear variables such that

(I)   for each  $\{x,y\}\subseteq  \con({\bf u})$, the height of ${\bf u}(x,y)$ is at most $4$;

(II) no block of $\bf u$ deletes to $x^+ y^+ x^+ y^+$;

(III) no block of $\bf u$ deletes to $x^+ y^+ x^+ z^+ x^+$;

(IV) If some block $\bf B$ of $\bf u$ deletes to $x^+y^+z^+x^+$ then $\bf u$ satisfies each of the following:

(a) if there is an occurrence of $y$ to the  left of $\bf B$ then there is no occurrence of $z$ to the right of $\bf B$;

(b) if there is an occurrence of $z$ to the  left of $\bf B$ then there is no occurrence of $y$ to the right of $\bf B$.

Then $\bf u$  can form an identity of $M$ only with a word of the same type.
\end{lemma}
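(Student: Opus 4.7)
The plan is to assume $M \models {\bf u} \approx {\bf v}$ and deduce that ${\bf v}$ has the same type as ${\bf u}$, via a block-by-block analysis. First, I would invoke Fact~\ref{basic} (which requires only Property (C$_3$), a weakening of (C$_4$)) to get that ${\bf u}$ and ${\bf v}$ share the same linear variables in the same positions and that corresponding blocks carry the same content. The task therefore reduces to proving that each pair of corresponding blocks ${\bf B}$ of ${\bf u}$ and ${\bf B}'$ of ${\bf v}$ has the same type.

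For each $c \in \non({\bf u})$, substituting $c \mapsto 1$ yields the monoid identity $D_c({\bf u}) \approx D_c({\bf v})$. The hypotheses on ${\bf u}$ are tailored so that $D_c({\bf u})$ satisfies the assumptions of Lemma~\ref{2let}: since $D_c({\bf u})(x,y) = {\bf u}(x,y)$, condition (i) there follows from (I); and since each block of $D_c({\bf u})$ is $D_c({\bf B})$ for a block ${\bf B}$ of ${\bf u}$, condition (ii) (every block has height at most $3$) follows from (II). Applying Lemma~\ref{2let} gives that $D_c({\bf u})$ and $D_c({\bf v})$ have the same type, so in particular $D_c({\bf B})$ and $D_c({\bf B}')$ have the same type for every pair of corresponding blocks. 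Blocks containing at most two non-linear variables are then resolved immediately by taking $c$ outside the block's content.

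The remaining case has $\con({\bf B}) = \con({\bf B}') = \{x,y,z\}$, with all three pairwise deletions of ${\bf B}$ agreeing in type with those of ${\bf B}'$. I would enumerate the 3-variable island sequences whose every pairwise projection has height at most $3$ (the bound forced by (II)) and verify case by case that the pairwise data pin down the island sequence, with essentially one family of exceptions: the patterns $a^+b^+c^+a^+$ and $a^+b^+a^+c^+a^+$ share all three pairwise projections, for every permutation $(a,b,c)$ of $(x,y,z)$. Condition (III) rules out the second pattern for ${\bf B}$, so the only dangerous configuration left is ${\bf B} = a^+b^+c^+a^+$ paired with ${\bf B}' = a^+b^+a^+c^+a^+$.

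The hard part is closing off this last configuration, which is exactly what condition (IV) is designed for. Under (IV), the occurrences of $b$ and $c$ outside ${\bf B}$ in ${\bf u}$ are constrained so that we cannot simultaneously have $b$ to the left of ${\bf B}$ and $c$ to the right, nor $c$ to the left and $b$ to the right. In each remaining case I would choose a substitution --- either $c \mapsto b$ or $b \mapsto c$, together with every linear variable $\mapsto 1$ --- under which ${\bf u}$ collapses to a word in $\{a,b\}$ of height at most $4$ while ${\bf v}$ collapses to a word of strictly greater height, contradicting Property (C$_4$). The main obstacles are the combinatorial enumeration confirming that $a^+b^+c^+a^+ / a^+b^+a^+c^+a^+$ really is the only ambiguity family, and the careful case-splitting in the final substitution argument to match the branch of (IV) that is in effect.
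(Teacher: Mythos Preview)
Your plan is correct and matches the paper's proof essentially step for step: reduce to blocks via Fact~\ref{basic}, use Lemma~\ref{2let} on the three deletions $D_c({\bf u})\approx D_c({\bf v})$ to pin down ${\bf B}'(p,q)$ for every pair, enumerate the admissible $3$-variable block types (the paper lists them as Cases~1--5), isolate the single ambiguous pair $x^+y^+z^+x^+$ versus $x^+y^+x^+z^+x^+$, and kill it with the substitution $y,z\mapsto y$ after a case split governed by~(IV) and~(I). The paper organises that last step into four subcases according to which of $y,z$ occur to the left of~${\bf B}$, reducing each to the base case ``neither to the left'' (or its dual), where~(I) bounds the islands to the right so that $\Theta({\bf u})$ has height at most~$4$ while $\Theta({\bf v})$ has height at least~$5$; this is exactly the ``careful case-splitting'' you anticipate.
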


\begin{proof}  Suppose that $\non({\bf u}) = \{x,y,z\}$ and that  $M$ satisfies ${\bf u} \approx {\bf v}$.

In view of Fact \ref{basic}, in order to prove that ${\bf u}$ and ${\bf v}$ are of the same type,  it is enough to show that every block
$\bf B$ of $\bf u$ is of the same type as the corresponding block ${\bf B'}$ of ${\bf v}$.
In view of Condition (I)--(III), modulo duality and renaming variables there are only five possibilities for $\bf B$.

{\bf Case 1:}  $\bf B$ involves only $y$ and $z$.

In this case, the words $D_x({\bf u})$ and   $D_x({\bf v})$ are of the same type by  Lemma \ref{2let}. Therefore, the corresponding blocks  of  $D_x({\bf u})$ and   $D_x({\bf v})$  are also of the same type. In particular,
$\bf B$ is of the same type as the corresponding block $\bf B'$ of $\bf v$.

{\bf Case 2:}  ${\bf B} = x^+y^+z^+x^+$.

In this case, in view of Lemma \ref{2let}, we have ${\bf B'}(y,z) = y^+ z^+$,  ${\bf B'}(y,x) = x^+y^+ x^+$ and  ${\bf B'}(x,z) = x^+ z^+x^+$. So, if $\bf B'$ and $\bf B$ are not of the same type
then  ${\bf B'} = x^+y^+x^+z^+x^+$.

Modulo duality, there are four possibilities for the word $\bf u$.

{\bf Subcase 2.1:} Neither $y$ nor $z$ occurs in $\bf u$ to the  left of $\bf B$.

In this case, Condition (I) implies the following.
\begin{claim} \label{xyzx}

(a) the occurrences of $y$ can form at most one island (denoted by ${_2y}^+$ if any) to the right of $\bf B$;

(b) the occurrences of $z$ can form at most one island (denoted by ${_2z}^+$ if any) to the right of $\bf B$;

(c) the last occurrence of $x$ in $\bf u$ precedes  ${_2y}^+$ and  ${_2z}^+$.

\end{claim}

 Let $\Theta: \mathfrak A \rightarrow \mathfrak A^*$ be a substitution such that
$\Theta(y) = \Theta(z)=y$, $\Theta(x)=x$ and $\Theta(p)=1$ for each $p \not \in \{x,y\}$. Then $\Theta({\bf v})$ has height at least 5, but  in view of Claim \ref{xyzx}, $\Theta({\bf u})$ is either $x^+y^+x^+$  or  $x^+y^+x^+y^+$.
This contradicts Property (C$_4$).

{\bf Subcase 2.2:} Both $y$ and $z$ occur in $\bf u$ to the  left of $\bf B$.

In this case, Condition (IV) implies that  neither $y$ nor $z$ occurs in $\bf u$ to the  right of $\bf B$.
Consequently, this case is dual to Subcase 2.1.

{\bf Subcase 2.3:}  $y$ occurs in $\bf u$ to the  left of $\bf B$ but $z$ does not occur  in $\bf u$ to the  left of $\bf B$.

In this case, there is no $y$ to the right of $\bf B$ by  Condition (I), and no $z$ to the right of $\bf B$ by Condition (IV).
Thus, this case is also dual to Subcase 2.1.

{\bf Subcase 2.4:}  $z$ occurs in $\bf u$ to the  left of $\bf B$ but $y$ does not occur  in $\bf u$ to the  left of $\bf B$.

In this case, there is no $z$ to the right of $\bf B$ by  Condition (I), and no $y$ to the right of $\bf B$ by Condition (IV).
Thus, this case is also dual to Subcase 2.1.

{\bf Case 3:} ${\bf B} = x^+y^+z^+y^+x^+$.

In this case,  in view of Lemma \ref{2let}, we have ${\bf B'}(y,z) = y^+z^+y^+$ and  ${\bf B'}(y,x) = x^+ y^+ x^+$.
Therefore, $\bf B'$ must be of the same type as $\bf B$.

{\bf Case 4:} ${\bf B} = x^+y^+x^+z^+$

In this case,  in view of Lemma \ref{2let}, we have ${\bf B'}(x,y) = x^+ y^+x^+$ and  ${\bf B'}(x,z) = x^+z^+$.
Therefore, $\bf B'$ must be of the same type as $\bf B$.

{\bf Case 5:} ${\bf B} = x^+y^+z^+$

In this case,  in view of Lemma \ref{2let}, we have ${\bf B'}(x,y) = x^+ y^+$ and  ${\bf B'}(y,z) = y^+ z^+$.
Therefore, $\bf B'$ must be of the same type as $\bf B$.
\end{proof}

\begin{lemma} \label{3let} Two words $\bf u$ and $\bf v$ are of the same type
 if and only if $\con({\bf u}) = \con({\bf v})$ and for each set of three variables $\{x,y, z\} \subseteq \con({\bf u})$,
the words ${\bf u}(x,y,z)$ and ${\bf v} (x,y,z)$ are of the same type.
\end{lemma}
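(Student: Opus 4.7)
The forward implication is immediate, since two words of the same type share both their content and their entire sequence of islands (exponents aside), and deleting occurrences of any fixed variables from both preserves that sequence. For the converse I plan to encode the type of a word by its \emph{island sequence}: let $\widehat{\bf u}$ denote the word obtained from $\bf u$ by collapsing each maximal block $x^+$ to a single $x$, so that $\widehat{\bf u}$ is \emph{reduced}---no two adjacent letters are equal---and $\bf u,\bf v$ are of the same type precisely when $\widehat{\bf u}=\widehat{\bf v}$. The hypothesis then reads: for every 3-subset $S\subseteq\con({\bf u})$, the word $\widehat{\bf u}|_S$ obtained by deleting from $\widehat{\bf u}$ the letters outside $S$ and then merging any resulting adjacent duplicates coincides with $\widehat{\bf v}|_S$ (and the sub-case $|\con({\bf u})|\le 2$ is straightforward, since then projecting onto the whole content returns $\bf u$ and $\bf v$ themselves).

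I argue by contradiction. Suppose $\widehat{\bf u}\neq\widehat{\bf v}$ and let $i$ be the smallest index at which they disagree. Assume first that both sequences have an $i$th entry and these differ: $\widehat{\bf u}[i]=a$ and $\widehat{\bf v}[i]=b$ with $a\neq b$. When $i\ge 2$, set $c=\widehat{\bf u}[i-1]=\widehat{\bf v}[i-1]$; because both sequences are reduced, $c\notin\{a,b\}$, so $S:=\{a,b,c\}$ is a genuine 3-subset of $\con({\bf u})$. The initial segment of length $i-1$ is common to both sequences, so projecting onto $S$ and reducing yields a common prefix $P$ that ends in $c$ (since $c\in S$ is the last letter of that segment). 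Appending the next letter gives $Pa\cdots$ in $\widehat{\bf u}|_S$ versus $Pb\cdots$ in $\widehat{\bf v}|_S$, and since $c\neq a$ and $c\neq b$ neither appendage merges with the terminal $c$ of $P$; hence the two reduced restrictions already differ at the position right after $P$, contradicting the hypothesis. The subcase $i=1$ is handled identically by taking any $c\in\con({\bf u})\setminus\{a,b\}$, available since $|\con({\bf u})|\ge 3$. In the remaining case one of $\widehat{\bf u},\widehat{\bf v}$ is a proper prefix of the other---say $\widehat{\bf u}=\gamma_1\cdots\gamma_m$ and $\widehat{\bf v}[m+1]=\delta\neq\gamma_m$---and any 3-subset $S\ni\gamma_m,\delta$ makes $\widehat{\bf v}|_S$ strictly longer than $\widehat{\bf u}|_S$, again a contradiction.

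The delicate point I expect to guard against is the behaviour of adjacent-duplicate merging inside the projections: the letter $a$ identified at the first discrepancy of $\widehat{\bf u}|_S$ could in principle be absorbed by a neighbouring $a$ coming from elsewhere in the sequence, which would corrupt the witness. Selecting $c$ to be the letter immediately preceding the discrepancy is precisely what blocks leftward absorption, since $c\neq a$ acts as a buffer between $P$ and $a$; and the merging that occurs with entries strictly after position $i$ can only extend the reduced form, not modify positions already fixed. Once this buffering argument is made rigorous (essentially a one-line check), the contradiction is immediate and the lemma follows.
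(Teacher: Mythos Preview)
Your argument is correct and follows essentially the same route as the paper: locate the first position at which the island sequences of $\bf u$ and $\bf v$ disagree, take the immediately preceding letter as a buffer, and project onto the resulting triple $\{a,b,c\}$ to exhibit a discrepancy between ${\bf u}(a,b,c)$ and ${\bf v}(a,b,c)$. Your treatment is in fact slightly more careful than the paper's, since you explicitly dispatch the boundary cases (discrepancy at the first position, and one island sequence a proper prefix of the other) that the paper leaves implicit.
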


\begin{proof} If  $\bf u$ and $\bf v$ are of the same type then evidently,  $\con({\bf u}) = \con({\bf v})$ and for each
 $\mathfrak X \subseteq \con({\bf u})$ the words ${\bf u} (\mathfrak X)$ and ${\bf v} (\mathfrak X)$ are also of the same type.

Now suppose that   for each set of three variables $\{x,y, z\} \subseteq \con({\bf u})$,
the words ${\bf u}(x,y,z)$ and ${\bf v} (x,y,z)$ are of the same type.
Then $\bf u$ and $\bf v$ begin with the same letter.
If $\bf u$ and $\bf v$ are not of the same type then ${\bf u}= {\bf a}x y {\bf b}$ and ${\bf u}= {\bf a'}x z {\bf b'}$
for some possibly empty words $\bf a$, $\bf a'$, $\bf b$ and $\bf b'$
such that ${\bf a}x$ and ${\bf a'}x$ have the same type and $\{x,y,z\}$ are pairwise distinct variables.
Then the words ${\bf u}(x,y,z)$ and  ${\bf v}(x,y,z)$ are also not of the same type. To avoid a contradiction, we must assume
that $\bf u$ and $\bf v$ are of the same type.
\end{proof}

\begin{lemma} \label{manylet} Let $M$ be a monoid that satisfies Property (C$_4$).
Let ${\bf u}$  be a word  such that

(I)  for each  $\{x,y\}\subseteq  \con({\bf u})$, the height of  ${\bf u}(x,y)$ is at most $4$;

(II) no block of $\bf u$ deletes to $x^+ y^+ x^+ y^+$;

(III) no block of $\bf u$ deletes to $x^+ y^+ x^+ z^+ x^+$;

(IV) If some block $\bf B$ of $\bf u$ deletes to $x^+y^+z^+x^+$ then $\bf u$ satisfies each of the following:

(a) if there is an occurrence of $y$ to the  left of $\bf B$ then there is no occurrence of $z$ to the right of $\bf B$;

(b) if there is an occurrence of $z$ to the  left of $\bf B$ then there is no occurrence of $y$ to the right of $\bf B$.

Then $\bf u$  can form an identity of $M$ only with a word of the same type.
\end{lemma}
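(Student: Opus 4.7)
The plan is to reduce the general statement to Lemma~\ref{3letter} and Lemma~\ref{2let} by means of Lemma~\ref{3let}. Suppose $M$ satisfies ${\bf u}\approx{\bf v}$. By Lemma~\ref{3let}, to conclude that ${\bf u}$ and ${\bf v}$ are of the same type it suffices to show that ${\bf u}(x,y,z)$ and ${\bf v}(x,y,z)$ are of the same type for every triple $\{x,y,z\}\subseteq\con({\bf u})$. Substituting $1$ for the remaining variables automatically gives $M\models{\bf u}(x,y,z)\approx{\bf v}(x,y,z)$ for each such triple, so the task becomes local to one triple at a time.

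Having fixed a triple, I would split into three cases according to $n=|\{x,y,z\}\cap\non({\bf u})|$. When $n\le 1$, the word ${\bf u}(x,y,z)$ contains at most one non-linear variable; Fact~\ref{basic} applied to ${\bf u}(x,y,z)\approx{\bf v}(x,y,z)$ then shows that the linear letters appear in the same positions on both sides and that each block is a power of a single variable, so corresponding blocks are automatically of the same type. For the substantive cases $n=2$ and $n=3$, I would form auxiliary words ${\bf u}^*$ and ${\bf v}^*$ by substituting $1$ for every non-linear variable of ${\bf u}$ lying outside $\{x,y,z\}$; this yields $M\models{\bf u}^*\approx{\bf v}^*$ with $\non({\bf u}^*)=\{x,y,z\}\cap\non({\bf u})$, and crucially every linear variable of ${\bf u}$ is retained, so the blocks of ${\bf u}^*$ are exactly the restrictions ${\bf B}_i(x,y,z)$ of the blocks ${\bf B}_i$ of ${\bf u}$. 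After verifying that conditions (I)--(IV) transfer from ${\bf u}$ to ${\bf u}^*$, I would apply Lemma~\ref{2let} (when $n=2$) or Lemma~\ref{3letter} (when $n=3$) to conclude that ${\bf u}^*$ and ${\bf v}^*$ are of the same type. Since the same-type relation is preserved under restriction and ${\bf u}^*(x,y,z)={\bf u}(x,y,z)$, this completes the reduction.

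The step I expect to be the main, though mechanical, obstacle is the transfer of conditions (II)--(IV) from ${\bf u}$ to ${\bf u}^*$. The reason this is routine is that the only non-linear variables of ${\bf u}^*$ lie in $\{x,y,z\}$, so any pair $\{a,b\}$ or triple $\{a,b,c\}$ appearing inside a block of ${\bf u}^*$ automatically lies in $\{x,y,z\}$; consequently every $(a,b)$- or $(a,b,c)$-restriction of a block of ${\bf u}^*$ coincides with the same restriction of the corresponding block of ${\bf u}$, and the left/right occurrence clauses in condition (IV) refer only to variables in $\{x,y,z\}$, which are untouched by the substitution.
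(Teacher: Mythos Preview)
Your proposal is correct and follows essentially the same route as the paper: reduce via Lemma~\ref{3let} to three-variable restrictions, build an auxiliary word by substituting $1$ for the extraneous non-linear variables while keeping the linear ones, and then invoke Lemma~\ref{3letter} (or Lemma~\ref{2let}). The only cosmetic difference is that the paper applies Lemma~\ref{3let} block-by-block (showing ${\bf B}(x,y,z)$ and ${\bf B}'(x,y,z)$ have the same type for every triple, hence ${\bf B}\tau{\bf B}'$), whereas you apply it to the whole words ${\bf u}$ and ${\bf v}$; the underlying mechanism---passing to ${\bf u}^*$ with $\non({\bf u}^*)\subseteq\{x,y,z\}$ and checking that (I)--(IV) survive---is identical, and your write-up is in fact more explicit about the case split on $|\{x,y,z\}\cap\non({\bf u})|$ and about why the transfer of (II)--(IV) is automatic.
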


\begin{proof}   Suppose that  $M$ satisfies ${\bf u} \approx {\bf v}$.
Let $\bf B$ and $\bf B'$ be the corresponding blocks in $\bf u$ and $\bf v$. Then for each  $\{x,y, z\} \subseteq \con({\bf u})$
the words ${\bf B}(x,y,z)$ and  ${\bf B'}(x,y,z)$  are of the same type by Lemma \ref{3letter}. Therefore, the words ${\bf B}$ and  ${\bf B'}$  are also of the same type by Lemma \ref{3let}.
\end{proof}

\section{Properties of words applicable to  ${\bf U}_n$}

\begin{fact} \label{Eu}   \cite[Fact 4.2]{OS} Given a word $\bf u$ and a substitution $\Theta: \mathfrak A \rightarrow \mathfrak A^+$, one can rename some variables in $\bf u$ so that the resulting word $E({\bf u})$ has the following properties:

(i)  $\con (E({\bf u})) \subseteq  \con ({\bf u})$;

(ii) $\Theta(E({\bf u}))$ is of the same type as  $\Theta ({\bf u})$;

(iii) for every  $x, y \in \con (E({\bf u}))$,  if the words $\Theta(x)$  and  $\Theta(y)$ are powers of the same variable then $x=y$.

\end{fact}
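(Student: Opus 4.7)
The plan is to identify those variables of $\bf u$ whose $\Theta$-images are powers of a common single letter, and to merge each such family into a single variable via renaming. Concretely, I will let $\mathfrak A_0 = \{x \in \con({\bf u}) : \Theta(x)\text{ is a positive power of a single variable}\}$ and, on $\mathfrak A_0$, define $x \sim y$ iff $\Theta(x)$ and $\Theta(y)$ are powers of the same variable. In each $\sim$-class I pick one representative, and let $E({\bf u})$ be obtained from $\bf u$ by replacing every occurrence of each $x \in \mathfrak A_0$ by the representative of its class (variables outside $\mathfrak A_0$ remain untouched).

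Properties (i) and (iii) then come essentially for free. Since the construction only renames variables, $\con(E({\bf u})) \subseteq \con({\bf u})$, yielding (i). For (iii), any two distinct $x, y \in \con(E({\bf u}))$ whose $\Theta$-images were powers of the same letter would be two distinct representatives of the same $\sim$-class, which is impossible.

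The heart of the argument will be (ii). Writing ${\bf u} = u_1 u_2 \cdots u_m$ and letting $u_i'$ denote the representative chosen for $u_i$ when $u_i \in \mathfrak A_0$ (and $u_i' = u_i$ otherwise), the two words
\[\Theta({\bf u}) = \Theta(u_1) \cdots \Theta(u_m), \qquad \Theta(E({\bf u})) = \Theta(u_1') \cdots \Theta(u_m')\]
differ only in that, whenever $u_i \in \mathfrak A_0$, the factor $\Theta(u_i) = t^{a_i}$ has been replaced by $\Theta(u_i') = t^{b_i}$ for the \emph{same} letter $t$. Such a replacement alters neither the leading nor the trailing letter of the factor, nor the (singleton) set of letters appearing inside it. Hence the sequence of maximal single-letter runs (islands) in the overall concatenation is preserved: the same adjacent factors fuse across their common boundary letter, and the same sequence of islands emerges, with at most the exponents altered. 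This is precisely the statement that $\Theta(E({\bf u}))$ and $\Theta({\bf u})$ are of the same type.

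The main obstacle I anticipate is the careful bookkeeping of island fusion at the boundaries between consecutive $\Theta(u_i)$ factors --- in particular, verifying that the first and last letters of each $\Theta(u_i')$ coincide with those of $\Theta(u_i)$, so that the same fusions happen on both sides. Since this reduces to the trivial observation that renaming within a $\sim$-class preserves the underlying letter $t$ of the factor $t^{a_i}$, once the framework is set up the verification is mechanical.
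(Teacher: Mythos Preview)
Your construction is correct and is the natural one: collapse each $\sim$-class of variables whose $\Theta$-images are powers of a common letter to a single representative, and observe that this only changes exponents of single-letter factors in the concatenation, so the island sequence of $\Theta({\bf u})$ is preserved. Note, however, that the paper does not actually prove this Fact; it is quoted verbatim from \cite[Fact~4.2]{OS}, so there is no in-paper proof to compare against. Your argument is precisely the intended one.
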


\begin{lemma} \label{prop1} \cite[Lemma 4.3]{OS} Let $\bf U$ be a word such that for each  $\{x,y\}\subseteq  \con({\bf U})$,
 the height of ${\bf U}(x,y)$ is at most $4$.
 Let $\Theta: \mathfrak A \rightarrow \mathfrak A^+$  be a substitution which satisfies Property (ii) in Fact \ref{Eu}.
If $\Theta({\bf u}) = {\bf U}$ then $\bf u$  satisfies Condition (I)  in Lemma \ref{manylet}, that is,
 for each  $\{x,y\}\subseteq  \con({\bf u})$,  the height of ${\bf  u}(x,y)$ is at most $4$.

\end{lemma}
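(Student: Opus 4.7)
My plan is to argue by contradiction. Suppose, for some pair $\{x,y\} \subseteq \con({\bf u})$, the word ${\bf u}(x,y)$ has height at least $5$; then I aim to exhibit letters $\{a,b\} \subseteq \con({\bf U})$ for which ${\bf U}(a,b)$ has height at least $5$, contradicting the hypothesis on $\bf U$.

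By swapping $x$ and $y$ if necessary (using the duality between starting with an $x$-island or a $y$-island), I may assume that ${\bf u}(x,y)$ contains a subword of the form $x^{p_1}y^{q_1}x^{p_2}y^{q_2}x^{p_3}$ with all exponents positive. Correspondingly, I would write
\[
{\bf u} \;=\; {\bf u}_0\, x^{p_1}\, {\bf u}_1\, y^{q_1}\, {\bf u}_2\, x^{p_2}\, {\bf u}_3\, y^{q_2}\, {\bf u}_4\, x^{p_3}\, {\bf u}_5,
\]
where no ${\bf u}_i$ contains $x$ or $y$. Applying $\Theta$ gives an analogous decomposition of $\bf U$ in which the blocks $\Theta(x)^{p_i}$ and $\Theta(y)^{q_j}$ sit alternately among the images $\Theta({\bf u}_i)$.

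The heart of the argument is to pick witness letters $a \in \con(\Theta(x))$ and $b \in \con(\Theta(y))$ so that the projection ${\bf U}(a,b)$ preserves the five alternating blocks. The ideal choice is $a,b$ for which $a \notin \con(\Theta(y)) \cup \bigcup_{z \ne x,y}\con(\Theta(z))$ and $b$ satisfies the symmetric condition. Under such a choice, each $\Theta(x)^{p_i}$ projects to a power $a^{r p_i}$ (with $r$ equal to the number of $a$'s in $\Theta(x)$), each $\Theta(y)^{q_j}$ to a power $b^{s q_j}$, and every $\Theta({\bf u}_i)$ projects to the empty word. Hence ${\bf U}(a,b)$ contains the height-five pattern $a^{rp_1}b^{sq_1}a^{rp_2}b^{sq_2}a^{rp_3}$, and we reach the desired contradiction.

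The step I expect to be the main obstacle is justifying the existence of such fresh witness letters. This is precisely where Property~(ii) of Fact~\ref{Eu} enters: after the renaming step $E$ has been absorbed into $\bf u$, distinct variables are no longer mapped by $\Theta$ to powers of the same single letter, and more generally there is enough letter-separation in the images $\Theta(z)$ to pick $a$ and $b$ as above. Making this precise will require a careful case analysis: for each potential obstruction (for instance, $\con(\Theta(x)) \subseteq \con(\Theta(y))$, or every letter of $\Theta(x)$ being reused in $\Theta(z)$ for some other $z$), one must show that such a configuration would force $\Theta$ to collapse two variables of $\bf u$ in a way excluded by Property~(ii), so that at least one legitimate pair $(a,b)$ always survives.
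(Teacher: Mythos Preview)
Your decomposition of ${\bf u}$ and the plan to choose witness letters $a\in\con(\Theta(x))$, $b\in\con(\Theta(y))$ is the right start, but the step you flag as the ``main obstacle'' is a genuine gap: insisting on \emph{fresh} witnesses (with $a\notin\con(\Theta(z))$ for all $z\ne x$, and symmetrically for $b$) is both unachievable in general and unnecessary. It is unachievable because Property~(iii) of Fact~\ref{Eu}---which is what you actually describe, and what is really being used here---only forbids $\Theta(x)$ and $\Theta(y)$ from both being powers of the \emph{same} variable; it places no restriction on the images of other variables $z$. For instance, take $\con({\bf u})=\{x,y,z\}$ with $\Theta(x)=c_1c_2$, $\Theta(y)=c_3$, $\Theta(z)=c_1c_2c_3$: Property~(iii) holds, yet every letter of $\Theta(x)$ also lies in $\con(\Theta(z))$, so no fresh $a$ exists and the case analysis you propose cannot be completed.

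Fortunately freshness is not needed, because extra occurrences of $a$ and $b$ contributed by the $\Theta({\bf u}_i)$ (or by $\Theta(x)$, $\Theta(y)$ themselves) can only \emph{raise} the height of ${\bf U}(a,b)$, never lower it. All you require is some pair $a\ne b$ with $a\in\con(\Theta(x))$ and $b\in\con(\Theta(y))$, and Property~(iii) does guarantee that. Since ${\bf u}(x,y)$ has height $\ge 5$, the word ${\bf u}$ contains $xyxyx$ (or its dual) as a scattered subword, hence ${\bf U}=\Theta({\bf u})$ contains $ababa$ as a scattered subword, hence so does ${\bf U}(a,b)$; and for a word over $\{a,b\}$ the height equals the length of its longest alternating scattered subword, so ${\bf U}(a,b)$ has height $\ge 5$, giving the contradiction. (The present paper does not reprove this lemma, citing \cite[Lemma~4.3]{OS}, but this is the intended argument.)
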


A word ${\bf u}$ is called a {\em scattered subword} of a word ${\bf v}$ whenever there exist words ${\bf u}_1, \dots, {\bf u}_k, {\bf v}_0, {\bf v}_1, \dots, , {\bf v}_{k-1}, {\bf v}_k \in \mathfrak A^*$ such that
${\bf u} = {\bf u}_1 \dots {\bf u}_k$ and ${\bf v} = {\bf v}_0 {\bf u}_1 {\bf v}_1\dots {\bf v}_{k-1}{\bf u}_k{\bf v}_k;$
in other terms, this means that one can extract $\bf u$ treated as a sequence of letters from the sequence $\bf v$. For example,
$x_1x_3$ is a scattered subword of $x_1x_2x_3x_2x_1$.

For the rest of this section, for each $n>2$, we use ${\bf U}_n$ to denote a word of the same type as  $(x_1 x_2 \dots x_n)  (x_n x_{n-1} \dots x_1)  (x_1 x_2 \dots x_n) $. The following properties of ${\bf U}_n$ can be easily verified:

$\bullet$ (P1)  ${\bf U}_n(x_i, x_j) = x_i^+ x_j^+ x_i^+x_j^+$  for each $1 \le i < j \le n$;

 $\bullet$ (P2)  ${\bf U}_n(x_i,x_j,x_k)=x_i^+x_j^+x_k^+ x_j^+ x_i^+x_j^+x_k^+$  for each $1 \le i < j < k \le n$;

$\bullet$ (P3)  $x_ix_j$ appears exactly twice in ${\bf U}_n$ as a scattered subword and
$x_j x_i$  appears exactly once in ${\bf U}_n$ as a scattered subword  for each $1 \le i < j \le n$;

$\bullet$ (P4)  ${\bf U}_n$ contains $x^+_n x^+_{n-1} \dots x^+_2 x^+_1$  as a subword between the two scattered  subwords $x_i x_j$
 for each $1 \le i < j \le n$;

$\bullet$ (P5)  the occurrences of $x_1$ and $x_n$ form exactly two islands in ${\bf U}_n$ and for each $1 <i<n$, the occurrences of $x_i$ form exactly three islands in  ${\bf U}_n$. We refer to these  islands as  ${_1x_{1}^+}$,   ${_2x_{1}^+}$,  ${_1x_{n}^+}$,   ${_2x_{n}^+}$,   ${_1x_{i}^+}$, ${_2x_{i}^+}$ and  ${_3x_{i}^+}$;

$\bullet$ (P6)  for each $1 < i < n$, ${\bf U}_n$
contains $x^+_n x^+_{n-1} \dots x^+_{i+2} x^+_{i+1}$ as a subword  between  ${_1x_{i}^+}$  and  ${_2x_{i}^+}$  and, contains
  $x^+_{i-1} \dots x^+_1$ as a subword  between  ${_2x_{i}^+}$  and  ${_3x_{i}^+}$. Also, ${\bf U}_n$
contains $x^+_n x^+_{n-1} \dots  x^+_{2}$ as a subword  between  ${_1x_{1}^+}$  and  ${_2x_{1}^+}$  and, contains
  $x^+_{n-1} \dots x^+_1$ as a subword  between  ${_1x_{n}^+}$  and  ${_2x_{n}^+}$;

$\bullet$ (P7)  for each $1 < i \ne j < n$, if $x_j$ occurs in ${\bf U}_n$ to the left of  ${_1x_{i}^+}$ or between ${_2x_{i}^+}$  and  ${_3x_{i}^+}$ then $j<i$;  if $x_j$ occurs in ${\bf U}_n$ between ${_1x_{i}^+}$  and  ${_2x_{i}^+}$ or  to the right of  ${_3x_{i}^+}$
then $j>i$.

\begin{lemma} \label{propii}  Let $\bf u$ be a word in less than $n-1$ variables  for some $n>2$. Let
 $\Theta: \mathfrak A \rightarrow \mathfrak A^+$  be a substitution  which satisfies Property (ii) in Fact \ref{Eu}.

If $\Theta({\bf u})={\bf U}_n$, then  ${\bf u}$ satisfies Condition (II)  in Lemma \ref{manylet}, that is,  no block of $\bf u$ deletes to $x^+ y^+ x^+ y^+$.
\end{lemma}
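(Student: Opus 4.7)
The plan is a proof by contradiction. Suppose some block ${\bf B}$ of ${\bf u}$ deletes to $x^+ y^+ x^+ y^+$ for some distinct $x, y \in \con({\bf u})$. By Fact \ref{Eu} and the standard reduction used for Lemma \ref{prop1}, I may assume that every variable $z \in \con({\bf u})$ has $\Theta(z) = x_{k(z)}^{e(z)}$ with $z \mapsto k(z)$ injective. In particular $\Theta(x) = x_i^p$ and $\Theta(y) = x_j^q$ for distinct indices $i, j \in \{1, \dots, n\}$ and positive integers $p, q$.

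The image $\Theta({\bf B})$ is a contiguous subword of $\Theta({\bf u}) = {\bf U}_n$. By injectivity, the $x_i$- and $x_j$-occurrences in $\Theta({\bf B})$ come only from $x$ and $y$, respectively, so writing ${\bf B}(x, y) = x^{a_1} y^{b_1} x^{a_2} y^{b_2}$ with each exponent positive yields
\[
\Theta({\bf B})(x_i, x_j) = x_i^{p a_1} x_j^{q b_1} x_i^{p a_2} x_j^{q b_2},
\]
a word of height $4$ starting with $x_i$.

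Now I would invoke property~(P1): ${\bf U}_n(x_i, x_j)$ itself has height exactly $4$. If $j < i$, then ${\bf U}_n(x_i, x_j) = x_j^+ x_i^+ x_j^+ x_i^+$ begins with $x_j$, and any contiguous subword of ${\bf U}_n$ whose projection to $\{x_i, x_j\}$ begins with $x_i$ has at most three $\{x_i, x_j\}$-islands, which contradicts the displayed height-$4$ pattern. So $i < j$. Inspecting ${\bf U}_n$ directly, the first $\{x_i, x_j\}$-island is the single $x_i$ at position $i$ of the first segment, and the last $\{x_i, x_j\}$-island is the single $x_j$ at position $2n + j$ of the third segment. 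For $\Theta({\bf B})(x_i, x_j)$ to meet all four $\{x_i,x_j\}$-islands of ${\bf U}_n$, $\Theta({\bf B})$ must span a contiguous range of ${\bf U}_n$ containing positions $i$ and $2n + j$, and hence contains the entire middle segment $x_n x_{n-1} \dots x_1$.

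The contradiction is then a counting argument. The middle segment uses all $n$ letters $x_1, \dots, x_n$, so $\Theta({\bf B})$ uses $n$ distinct letters; but by the injectivity of $z \mapsto k(z)$, the number of distinct letters in $\Theta({\bf B})$ equals $|\con({\bf B})| \le |\con({\bf u})| < n-1$, a contradiction. The only genuine obstacle is the bookkeeping that identifies the first and last $\{x_i,x_j\}$-islands within ${\bf U}_n$ and confirms that bracketing them forces the whole middle segment into $\Theta({\bf B})$; once that is in hand, property~(P1) and the explicit form of ${\bf U}_n$ make the rest immediate.
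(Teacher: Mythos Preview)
Your reduction to the case where every $\Theta(z)$ is a power of a single variable $x_{k(z)}$ is not justified by the hypotheses. What Fact~\ref{Eu} (specifically, its property~(iii)) gives is only that \emph{distinct} variables of ${\bf u}$ are not both sent by $\Theta$ to powers of the \emph{same} $x_k$; it does \emph{not} force each $\Theta(z)$ to be a power of a single letter. There is no ``standard reduction'' in Lemma~\ref{prop1} that achieves this, and since the conclusion is about the given word~${\bf u}$, you cannot alter ${\bf u}$ further. Without this assumption both of your key steps fail: (a) other letters of ${\bf B}$ may also contribute occurrences of $x_i$ or $x_j$, so the displayed formula for $\Theta({\bf B})(x_i,x_j)$ need not hold; and (b) the final counting collapses, because $\Theta({\bf B})$ can involve many more letters $x_k$ than $|\con({\bf B})|$.

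The paper handles exactly this point. Using only $x_i\in\con(\Theta(x))$ and $x_j\in\con(\Theta(y))$ with $i\neq j$, the pattern $x^+y^+x^+y^+$ forces two scattered subwords $x_ix_j$ in $\Theta({\bf B})$, hence $i<j$ by (P3), and then (P4) places the decreasing block $x_n^+x_{n-1}^+\cdots x_1^+$ inside $\Theta({\bf B})$. Instead of counting, one observes that fewer than $n-1$ variables of ${\bf B}$ cannot cover this decreasing block without some $\Theta(t)$ containing a subword $x_{k+1}x_k$; by (P3) such a $t$ is linear in ${\bf u}$, contradicting that ${\bf B}$ is a block. Your counting idea is essentially the special case where every relevant $\Theta(t)$ \emph{is} a power of a single letter, but you must also treat---indeed, exploit---the complementary case where some $\Theta(t)$ straddles two islands of the middle segment.
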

\begin{proof}
  Suppose that some block $\bf B$ of ${\bf u}$ deletes to $x^+y^+x^+y^+$, where $\{x,y\}\subseteq \con({\bf u})$. In view of Fact~\ref{Eu}, there are $1\leq i\neq j\leq n$ such that $x_i\in \con(\Theta(x))$ and $x_j \in\con(\Theta(y))$. Since  $\Theta({\bf B})$ contains $x_i x_j$ twice as a scattered subword,   Property (P3) implies that $i < j$.
Then $\Theta({\bf B})$ contains $x^+_n x^+_{n-1} \dots x^+_2 x^+_1$  between the two scattered  subwords $x_i x_j$ by  Property (P4).

Since $\con({\bf u})$ involves less than $n-1$ distinct variables there is a variable $t\in\con({\bf B})$ such that
 $\Theta(t)$ contains $x_{k+1} x_{k}$ for some $1 < k <n$. In view of Property (P3),
 $t$ is linear in $\bf u$. Therefore, there is a linear letter between the two scattered subwords $xy$ in $\bf B$.
A contradiction.
\end{proof}

\begin{lemma} \label{propiii}   Let $\bf u$ be a word in less than $n-1$ variables  for some $n>2$. Let
 $\Theta: \mathfrak A \rightarrow \mathfrak A^+$  be a substitution  which satisfies Property (ii) in Fact \ref{Eu}.

If $\Theta({\bf u})={\bf U}_n$, then  ${\bf u}$ satisfies Condition (III)  in Lemma \ref{manylet}, that is, no block of $\bf u$ deletes to $x^+ y^+ x^+ z^+ x^+$.
\end{lemma}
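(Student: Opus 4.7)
The plan is to follow the structure of the proof of Lemma~\ref{propii} above, but to use Properties (P5)--(P7) in place of (P3)--(P4) in order to locate the middle descending segment of $\mathbf{U}_n$ inside $\Theta(\mathbf{B})$. Suppose for contradiction that some block $\mathbf{B}$ of $\mathbf{u}$ deletes to $x^+y^+x^+z^+x^+$ with $\{x,y,z\}\subseteq\con(\mathbf{B})$.

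The first step is to pick a letter $x_a\in\con(\Theta(x))$ such that $\con(\Theta(y))\neq\{x_a\}$ and $\con(\Theta(z))\neq\{x_a\}$. When $\Theta(x)$ is a power of a single variable $x_a$, Fact~\ref{Eu}(iii) immediately forbids $\Theta(y)$ or $\Theta(z)$ from being a power of $x_a$, so such a choice works. Otherwise a short case analysis on the contents of $\Theta(x)$, $\Theta(y)$, $\Theta(z)$ produces an admissible $x_a$; the only genuinely delicate sub-case is $\con(\Theta(x))=\{x_p,x_q\}$ with $\Theta(y),\Theta(z)$ being powers of $x_p$ and $x_q$ respectively, and there the internal alternation between $x_p$ and $x_q$ inside $\Theta(x)$ itself supplies the extra $x_a$-island separations needed below.

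Having fixed $x_a$, the non-$x_a$ letters contributed by $\Theta(y)$ and $\Theta(z)$ between the three $x$-islands of $\mathbf{B}$ force $\Theta(\mathbf{B})$ to contain at least three distinct $x_a$-islands. By Property (P5), $x_a$ has three islands in $\mathbf{U}_n$ precisely when $1<a<n$, so $1<a<n$ and the three $x_a$-islands of $\Theta(\mathbf{B})$ must be exactly ${_1x_a^+},\,{_2x_a^+},\,{_3x_a^+}$. Property (P6) then forces the entire middle descending subword $x_n^+ x_{n-1}^+\cdots x_1^+$ of $\mathbf{U}_n$, which lies between ${_1x_a^+}$ and ${_3x_a^+}$, to be contained inside the contiguous subword $\Theta(\mathbf{B})$ of $\mathbf{U}_n$.

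The argument now closes exactly as in Lemma~\ref{propii}: because $x_n^+ x_{n-1}^+\cdots x_1^+$ is a non-repeating sequence of islands, any $\Theta(t)$ can occur as a subword of it at most once; therefore, if all $n-1$ transitions $x_{k+1}x_k$ were situated at boundaries between consecutive $\Theta$-images in $\mathbf{B}$, the segment would need at least $n$ distinct non-linear variables of $\mathbf{u}$ to cover it, which contradicts $|\con(\mathbf{u})|<n-1$. Hence some transition $x_{k+1}x_k$ with $1<k<n$ lies strictly inside $\Theta(t)$ for some $t\in\con(\mathbf{B})$; since such a $t$ is non-linear, $\Theta(t)$ occurs at least twice in $\Theta(\mathbf{u})=\mathbf{U}_n$, producing two occurrences of $x_{k+1}x_k$ in $\mathbf{U}_n$ and contradicting Property (P3). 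I expect the first step---isolating the correct $x_a$ in the edge case where $\Theta(y)$ and $\Theta(z)$ together cover $\con(\Theta(x))$---to be the main obstacle; once past it, every remaining step mirrors the proof of Lemma~\ref{propii} verbatim.
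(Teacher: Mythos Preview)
Your approach mirrors the paper's, but you make the first step harder than necessary. Because $\mathbf{B}(x,y,z)=x^+y^+x^+z^+x^+$, the variable $x$ occurs at least three times in $\mathbf{u}$, so $\Theta(x)$ occurs at least three times as a factor of $\mathbf{U}_n$; if $\Theta(x)$ involved two distinct letters it would carry some scattered pair $x_px_q$ (or $x_qx_p$), and three disjoint copies of that pair inside $\mathbf{U}_n$ already contradict~(P3). Thus $\Theta(x)$ is automatically a power of a single variable $x_i$, your ``delicate sub-case'' $\con(\Theta(x))=\{x_p,x_q\}$ never arises, and Fact~\ref{Eu}(iii) then yields $\con(\Theta(y))\ne\{x_i\}$ and $\con(\Theta(z))\ne\{x_i\}$ at once. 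The paper records this step simply as ``Due to Fact~\ref{Eu}, there are pairwise distinct $i,j,k$\dots'', tacitly using the same observation; your hand-wave about ``internal alternation'' is therefore unnecessary (and, as written, not a complete argument). After this point your proof and the paper's coincide: the paper cites~(P6) to place the two halves $x_n^+\cdots x_{i+1}^+$ and $x_{i-1}^+\cdots x_1^+$ of the descending run inside $\Theta(\mathbf{B})$, while you place the whole run $x_n^+\cdots x_1^+$ there, and both finish by counting that fewer than $n-1$ variables force some $t\in\con(\mathbf{B})$ with $x_{r+1}x_r$ inside $\Theta(t)$, whence~(P3) makes $t$ linear, contradicting $t\in\con(\mathbf{B})$.
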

\begin{proof}
  Suppose that some block $\bf B$ of  ${\bf u}$ deletes to $x^+y^+x^+z^+x^+$, where $x,y,z$ are three distinct non-linear variables that belong to $\con({\bf u})$.

 Due to Fact~\ref{Eu}, there are pairwise distinct $1\leq i,j,k\leq n$ such that $x_i\in\con(\Theta(x))$, $x_j\in\con(\Theta(y))$ and $x_k\in\con(\Theta(z))$. Therefore, the occurrences of $x_i$ form at least three islands in $\Theta({\bf B})$. By Property (P5), $1<i<n$
and the occurrences of $x_i$ form exactly three islands in $\Theta({\bf B})$. Property (P6) implies that $\Theta({\bf B})$ contains $x^+_n x^+_{n-1} \dots x^+_{i+2} x^+_{i+1}$  between the first two islands formed by $x_i$
and  $x^+_{i-1} \dots x^+_1$  between the last two islands formed by $x_i$.

Since $\con({\bf u})$ involves less than $n-1$ distinct variables there is a variable $t\in\con({\bf B})$ such that
 $\Theta(t)$ contains $x_{r+1} x_{r}$ for some $1 \le  r <  i-1$ or $i+1 \le r < n$. In view of Property (P3),
 $t$ is linear in $\bf u$. Therefore, there is a linear letter in $\bf B$ either between the first two islands formed by $x$ or between the last two islands formed by $x$.
A contradiction.
\end{proof}

\begin{lemma} \label{prop2}  Let $\bf u$ be a word in less than $n-1$ variables  for some $n>2$ such that some block $\bf B$ of $\bf u$
deletes to $x^+y^+z^+x^+$. Let
 $\Theta: \mathfrak A \rightarrow \mathfrak A^+$  be a substitution  which satisfies Property (ii) in Fact \ref{Eu}.

If $\Theta({\bf u})={\bf U}_n$, then  ${\bf u}$ satisfies Condition (IV)  in Lemma \ref{manylet}, that is, each of the following holds:

(a) if there is an occurrence of $y$ to the  left of $\bf B$ then there is no occurrence of $z$ to the right of $\bf B$;

(b) if there is an occurrence of $z$ to the  left of $\bf B$ then there is no occurrence of $y$ to the right of $\bf B$.

\end{lemma}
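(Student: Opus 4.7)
I would prove (a); (b) follows by a symmetric argument (left/right mirrored, $y$ and $z$ swapped). Suppose (a) fails, so the block $\mathbf{B}$ deletes to $x^+y^+z^+x^+$, with $y$ occurring in $\mathbf{u}$ to the left of $\mathbf{B}$ and $z$ to the right. By Fact~\ref{Eu} I may assume $\mathbf{u}=E(\mathbf{u})$, so property~(iii) holds. As in the proof of Lemma~\ref{propiii}, the opening move is to choose $x_i\in\con(\Theta(x))$, $x_j\in\con(\Theta(y))$, $x_k\in\con(\Theta(z))$ with $x_i\notin\{x_j,x_k\}$. Since $\mathbf{B}(x,y,z)=x^{a_1}y^{b_1}z^{c_1}x^{a_2}$, the letters $x_j,x_k$ (both distinct from $x_i$) lie in $\Theta(\mathbf{B})$ between its two $\Theta(x)$-groups, so the occurrences of $x_i$ form at least two islands in $\Theta(\mathbf{B})$.

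I would then split on $i$. If $i\in\{1,n\}$, (P5) gives exactly two islands of $x_i$ in $\mathbf{U}_n$, both of which must lie in $\Theta(\mathbf{B})$; hence $\Theta(\mathbf{B})$ starts at the very beginning (resp.\ ends at the very end) of $\mathbf{U}_n$, contradicting the occurrence of $y$ to the left of $\mathbf{B}$ (resp.\ $z$ to the right). If $1<i<n$, there are three islands ${_1x_i^+},{_2x_i^+},{_3x_i^+}$, and since $\Theta(\mathbf{B})$ is a contiguous subword containing at least two of them, it contains either exactly $\{{_1x_i^+},{_2x_i^+}\}$, or exactly $\{{_2x_i^+},{_3x_i^+}\}$, or all three. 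In the first subcase I apply (P7) twice: the $x_j$'s produced by the middle $\Theta(y)^{b_1}$-block of $\Theta(\mathbf{B})$ lie between ${_1x_i^+}$ and ${_2x_i^+}$, forcing $j>i$, while the $x_j$'s produced by the left-of-$\mathbf{B}$ occurrence of $y$ lie to the left of ${_1x_i^+}$, forcing $j<i$ --- a contradiction. The second subcase is the mirror image: (P7) forces $k<i$ and $k>i$ from the two occurrences of $z$. In the third subcase (P6) shows that $\Theta(\mathbf{B})$ contains $x_n^+x_{n-1}^+\cdots x_1^+$ as a subword, and the transition-counting argument from the proof of Lemma~\ref{propii} applies: since $|\con(\mathbf{u})|<n-1$, some adjacent transition $x_{l+1}x_l$ must sit inside a single $\Theta(t)$ with $t\in\con(\mathbf{B})$ non-linear, violating (P3).

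The main obstacle, inherited from Lemma~\ref{propiii}, is this very first step: property~(iii) alone only forbids two distinct variables from mapping to powers of the same single letter, so one cannot immediately exclude the pathological configuration $\Theta(y)=x_{l_y}^b$, $\Theta(z)=x_{l_z}^c$, $\con(\Theta(x))\subseteq\{x_{l_y},x_{l_z}\}$ in which no $x_i$ distinct from both $x_j$ and $x_k$ exists. Ruling this out uses $\Theta(\mathbf{u})=\mathbf{U}_n$ and $|\con(\mathbf{u})|<n-1$ together with the presence of $y$ both inside and outside $\mathbf{B}$ (and similarly for $z$). Once the letters are chosen the rest of the argument is a routine combination of the techniques of Lemmas~\ref{propii} and~\ref{propiii}; the genuinely new ingredient is the double use of (P7) in the first two subcases, applied simultaneously to the in-block and out-of-block occurrences of $y$ (respectively $z$).
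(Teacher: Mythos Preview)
Your argument is headed in the right direction but misses the paper's key organizational step. The paper's opening move is to prove that $\Theta(x)$ is a power $x_i^{+}$ of a single variable: since $\Theta(x)$ occurs as a contiguous subword of $\mathbf{U}_n$, if it is not a power it contains $x_kx_{k+1}$ (or $x_{k+1}x_k$) for some $k$; then (P3) forces $x$ to occur at most twice (resp.\ once) in $\mathbf{u}$, and in the former case (P4) together with $|\con(\mathbf{u})|<n-1$ yields a linear letter between the two $x$-islands of $\mathbf{B}$, contradicting that $\mathbf{B}$ is a block. This single observation dissolves your ``main obstacle'' completely, and it also removes an implicit dependence in your subcase analysis: without $\Theta(x)=x_i^{+}$, a single factor $\Theta(x)^{a_1}$ could already straddle several $x_i$-islands of $\mathbf{U}_n$, so your claim that the in-block $\Theta(y)^{b_1}$ lies between ${_1x_i^{+}}$ and ${_2x_i^{+}}$ is not yet justified.

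With $\Theta(x)=x_i^{+}$ in hand, the paper does not argue by contradiction. There are exactly four possibilities for which pair of $x_i$-islands receives $\Theta({_1x^{+}})$ and $\Theta({_2x^{+}})$. Two of them --- your $i\in\{1,n\}$ case and your ``all three islands'' subcase --- are eliminated outright by the same linear-letter-in-block argument via (P6), with no reference to occurrences of $y$ or $z$ outside $\mathbf{B}$. In the remaining two cases (landing in ${_1x_i^{+}},{_2x_i^{+}}$ or in ${_2x_i^{+}},{_3x_i^{+}}$), (P7) shows directly that \emph{neither} $y$ nor $z$ can occur to the left (resp.\ right) of $\mathbf{B}$, establishing (a) and (b) simultaneously rather than separately.
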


\begin{proof} If $\Theta(x)$ is not a power of a variable, then  $\Theta(x)$ contains $x_kx_{k+1}$ for some $1 \le k <  n$ and  $x$ appears twice in $\bf u$ by Property (P3). Since $\bf u$ involves less than $n-1$ variables, there is a linear letter between the two occurrences of $x$ in $\bf u$ due to Property (P4). This contradicts the fact that $\bf B$ is a block of $\bf u$. So, we can assume
that   $\Theta(x) = x_i^+$ for some $1 \le i \le n$.

Due to Property (P5), the occurrences of $x_i$ form at most three islands in ${\bf U}_n$.  We refer to the two islands formed by $x$ in $\bf B$ as ${_1x^+}$ and   ${_2x^+}$.
Since $\Theta$ satisfies  Property (ii) in Fact \ref{Eu}, four cases are possible.

{\bf Case 1:} $i=1$ or $i=n$, $\Theta({_1x^+})$ is a subword of  ${_1x_{i}^+}$ and  $\Theta({_2x^+})$ is a subword of  ${_2x_{i}^+}$.

 Since $\bf u$ involves less than $n-1$ variables, there is a linear letter between the two occurrences of $x$ in $\bf u$ due to Property (P6).
This contradicts the fact that $\bf B$ is a block of $\bf u$.

{\bf Case 2:} $1 < i < n$, $\Theta({_1x^+})$ is a subword of  ${_1x_{i}^+}$ and  $\Theta({_2x^+})$ is a subword of  ${_3x_{i}^+}$.

Use the same arguments as for Case 1.

{\bf Case 3:} $1 < i < n$, $\Theta({_1x^+})$ is a subword of  ${_1x_{i}^+}$ and  $\Theta({_2x^+})$ is a subword of  ${_2x_{i}^+}$.

In this case, in view of Property (P7), neither $y$ nor $z$ occurs to the left of $\bf B$.

{\bf Case 4:} $1 < i < n$, $\Theta({_1x^+})$ is a subword of  ${_2x_{i}^+}$ and  $\Theta({_2x^+})$ is a subword of  ${_3x_{i}^+}$.

In this case, in view of Property (P7), neither $y$ nor $z$ occurs to the right of $\bf B$.
\end{proof}

\section{Proof of Theorem \ref{main}}\label{sec:thm}

\begin{lemma} \label{nfblemma}  \cite[Lemma 5.1]{OS}  Let $\tau$ be an equivalence relation on the free semigroup $\mathfrak A^+$ and $S$ be a semigroup.
Suppose that for infinitely many $n$, $S$ satisfies an identity ${\bf U}_n \approx {\bf V}_n$ in at least $n$ variables
such that ${\bf U}_n$ and ${\bf V}_n$ are not $\tau$-related.

Suppose also that for every identity ${\bf u} \approx {\bf v}$ of $S$ in less than $n-1$ variables, every
 word  $\bf U$ such that ${\bf U} \tau {\bf U}_n$ and every substitution
 $\Theta: \mathfrak A \rightarrow \mathfrak A^+$ such that $\Theta({\bf u}) = {\bf U}$ we have
 ${\bf U} \tau \Theta({\bf v})$.  Then $S$ is NFB.
\end{lemma}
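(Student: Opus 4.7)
The plan is to prove Lemma~\ref{nfblemma} by contradiction, showing that if $S$ were finitely based then for a carefully chosen $n$ the two hypotheses together would force ${\bf U}_n\,\tau\,{\bf V}_n$, contradicting the first hypothesis.

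Assume for contradiction that $S$ is finitely based, fix a finite basis $\Sigma$ of identities of $S$, and let $m$ be the maximum number of variables appearing in any identity of $\Sigma$. Choose $n$ from the infinite supply furnished by the first hypothesis with $n>m+3$, and consider the resulting identity ${\bf U}_n\approx{\bf V}_n$. Since $S$ satisfies this identity, it is a consequence of $\Sigma$, so there is a deduction
\[
{\bf U}_n={\bf W}_0,\ {\bf W}_1,\ \ldots,\ {\bf W}_k={\bf V}_n,
\]
in which each transition ${\bf W}_i\to{\bf W}_{i+1}$ rewrites a factor: ${\bf W}_i={\bf A}_i\,\Theta_i({\bf u}_i)\,{\bf B}_i$ and ${\bf W}_{i+1}={\bf A}_i\,\Theta_i({\bf v}_i)\,{\bf B}_i$ for some $({\bf u}_i\approx{\bf v}_i)\in\Sigma$ and some substitution $\Theta_i$.

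To bring each step into the form required by the second hypothesis, I would absorb the contexts into two fresh variables: set ${\bf u}_i'=a_i{\bf u}_ib_i$ and ${\bf v}_i'=a_i{\bf v}_ib_i$ with $a_i,b_i\notin\con({\bf u}_i)$, and extend $\Theta_i$ to $\Theta_i'$ by $\Theta_i'(a_i)={\bf A}_i$, $\Theta_i'(b_i)={\bf B}_i$, dropping one or both fresh variables when the corresponding context is empty (needed in the semigroup setting). Then ${\bf u}_i'\approx{\bf v}_i'$ is still an identity of $S$, uses at most $m+2<n-1$ variables, and satisfies $\Theta_i'({\bf u}_i')={\bf W}_i$, $\Theta_i'({\bf v}_i')={\bf W}_{i+1}$. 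Now induct on $i$: the base case ${\bf W}_0={\bf U}_n\,\tau\,{\bf U}_n$ is trivial; if ${\bf W}_i\,\tau\,{\bf U}_n$, the second hypothesis applied to the identity ${\bf u}_i'\approx{\bf v}_i'$, to ${\bf U}={\bf W}_i$ and to $\Theta=\Theta_i'$ yields ${\bf W}_i\,\tau\,\Theta_i'({\bf v}_i')={\bf W}_{i+1}$, and since $\tau$ is an equivalence relation, ${\bf W}_{i+1}\,\tau\,{\bf U}_n$. In particular ${\bf V}_n={\bf W}_k\,\tau\,{\bf U}_n$, contradicting the first hypothesis that ${\bf U}_n$ and ${\bf V}_n$ are not $\tau$-related.

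The heart of the argument is the context-absorption trick that converts an internal rewriting step into a substitution applied to the whole word, after which the second hypothesis does all of the real work via a straightforward induction. The main obstacle is therefore not any deep combinatorial fact but the standard calibration: one must choose $n$ large enough that the padded identities remain under the bound $n-1$ on variables, and one must handle cleanly the edge cases where ${\bf A}_i$ or ${\bf B}_i$ is empty (relevant in the semigroup setting, where substitutions cannot take empty values).
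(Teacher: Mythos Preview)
Your proof is correct and follows the standard approach for lemmas of this kind. Note that the paper does not actually prove this lemma itself---it is quoted from \cite[Lemma~5.1]{OS}---so there is no in-paper proof to compare against; your context-absorption-and-induction argument (padding each rewriting step with fresh variables so the second hypothesis applies, then walking along the derivation) is exactly the expected proof and is presumably what appears in the cited source.
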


We use $\con_{2}({\bf u})$ to denote the set of all variables which occur twice  in $\bf u$ and $\con_{>2}({\bf u})$ to denote the set of all
variables which occur at least 3 times in $\bf u$. The next lemma is similar to Lemma 4.1 in \cite{OS} (Lemma \ref{redef1} below).

\begin{lemma} \label{redef}  Let $\bf u$ and $\bf v$ be two words of the same type such that $\lin({\bf u}) = \lin({\bf v})$,
$\con_2({\bf u}) = \con_2({\bf v})$ and $\con_{>2}({\bf u}) = \con_{>2}({\bf v})$.

Let  $\Theta: \mathfrak A \rightarrow \mathfrak A^+$
be a substitution that has the following property:

(*) If $\Theta(x)$ contains more than one variable then $x \in \lin({\bf u}) \cup \con_{2}({\bf u})$.

Then $\Theta({\bf u})$ and  $\Theta({\bf v})$  are also of the same type.

\end{lemma}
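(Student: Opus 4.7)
The plan is to exploit the shared island decomposition of ${\bf u}$ and ${\bf v}$ (they share it because they are of the same type) and then track what $\Theta$ does block by block. Write ${\bf u}=y_1^{a_1}y_2^{a_2}\cdots y_m^{a_m}$ and ${\bf v}=y_1^{b_1}y_2^{b_2}\cdots y_m^{b_m}$ with $y_i\neq y_{i+1}$ for every $i$; then $\Theta({\bf u})=\Theta(y_1)^{a_1}\cdots\Theta(y_m)^{a_m}$ and $\Theta({\bf v})=\Theta(y_1)^{b_1}\cdots\Theta(y_m)^{b_m}$. Since two words are of the same type precisely when they have identical sequences of island letters (only the exponents may differ), it suffices to show that this island-letter sequence is the same for $\Theta({\bf u})$ and $\Theta({\bf v})$.

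The structural claim driving the proof is that $a_i=b_i$ whenever $y_i\in\lin({\bf u})\cup\con_2({\bf u})$. For linear $y_i$ this is immediate. For $y_i\in\con_2({\bf u})=\con_2({\bf v})$, I observe that the set of indices $\{j:y_j=y_i\}$ is the same for both words (it is read off the shared $y$-sequence) and the exponents along this set sum to $2$ on each side. A short case analysis on whether this set has one or two elements then yields $a_j=b_j$ pointwise. On the other hand, when $y_i\in\con_{>2}({\bf u})$, property (*) forces $\Theta(y_i)$ to be a power of a single variable, say $\Theta(y_i)=z_i^{p_i}$. Hence $\Theta(y_i)^{a_i}=z_i^{p_ia_i}$ and $\Theta(y_i)^{b_i}=z_i^{p_ib_i}$ are both single islands in the variable $z_i$, regardless of whether $a_i=b_i$.

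Combining these two cases, each block $\Theta(y_i)^{a_i}$ of $\Theta({\bf u})$ matches the corresponding block $\Theta(y_i)^{b_i}$ of $\Theta({\bf v})$ in its internal sequence of island letters and in its first and last letter. The only way consecutive blocks can fuse at a boundary is if the last letter of $\Theta(y_i)$ equals the first letter of $\Theta(y_{i+1})$, and these letters do not depend on the exponents $a_i,b_i$; hence fusion occurs at exactly the same block boundaries in $\Theta({\bf u})$ and in $\Theta({\bf v})$. Thus the two words have identical island-letter sequences and are of the same type. The only delicate step is establishing the pointwise equality $a_i=b_i$ in the $\con_2$ situation; once this is in place, the rest is a routine block-by-block comparison in which property (*) disposes of the $\con_{>2}$ case.
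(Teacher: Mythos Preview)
Your argument is correct and follows essentially the same route as the paper: write the common island decomposition ${\bf u}=c_1^{u_1}\cdots c_r^{u_r}$, ${\bf v}=c_1^{v_1}\cdots c_r^{v_r}$, show that $\Theta(c_i^{u_i})$ and $\Theta(c_i^{v_i})$ have the same type for each $i$ (splitting into the linear, $\con_2$, and $\con_{>2}$ cases exactly as you do), and then concatenate. Your treatment of the $\con_2$ case via the cardinality of $\{j:y_j=y_i\}$ is just a rephrasing of the paper's observation that for a twice-occurring letter the island pattern (one island of exponent $2$ versus two islands of exponent $1$) is determined by the shared type, and your explicit remark about boundary fusion simply spells out what the paper leaves implicit when it passes from ``each factor has the same type'' to ``the product has the same type''.
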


\begin{proof} Since $\bf u$ and $\bf v$ are of the same type, the following is true.

\begin{claim} \label{yy} Suppose that $y \in \con_2({\bf u})$. If there is an occurrence of $x$ between the two occurrences of $y$  in $\bf u$  then there is an occurrence of $x$ between the two occurrences of $y$  in $\bf v$.
\end{claim}

Since $\bf u$ and $\bf v$ are of the same type, for some $r \ge 1$ and $u_1, \dots,u_r, v_1, \dots, v_r >0$ we have ${\bf u} =  c_1^{u_1} c_2^{u_2}\dots c_r^{u_r}$ and  ${\bf v} =  c_1^{v_1}c_2^{v_2}\dots  c_r^{v_r}$,
where $c_1, \dots , c_r \in \mathfrak A$ are such that $c_i \ne c_{i+1}$.

 First, let us prove that for each $1 \le i \le r$ the words $\Theta({c}_i^{u_i})$ and $\Theta({c}_i^{v_i})$ are of the same type.
Indeed, If $c_i$ is linear in $\bf u$ (and in $\bf v$) then $u_i=v_i=1$ and $\Theta({c}_i^{u_i}) =  \Theta({c}_i^{v_i})$.
If $c_i$ occurs twice in  $\bf u$ (and in $\bf v$), then in view of Claim \ref{yy}, either  $u_i=v_i=1$ or  $u_i=v_i=2$.
In either case, we have  $\Theta({c}_i^{u_i}) =  \Theta({c}_i^{v_i})$.
If $c_i$ occurs at least 3 times in $\bf u$  (and in $\bf v$) then $\Theta({c}_i^{u_1}) = x^+$ for some variable $x$ and  $\Theta({c}_i^{v_i})$ is a power of the same variable.

Since \[\Theta({\bf u}) = \Theta({c}_1^{u_1}{c}_2^{u_2}\dots {c}_r^{u_r} ) = \Theta({c}_1^{u_1}) \Theta({c}_2^{u_2})\dots \Theta({c}_r^{u_r})\] and
\[\Theta({\bf v}) = \Theta({c}_1^{v_1}{c}_2^{v_2}\dots {c}_r^{v_r} ) = \Theta({c}_1^{v_1}) \Theta({c}_2^{v_2})\dots \Theta({ c}_r^{v_r}),\]
we conclude that $\Theta({\bf u})$ and $\Theta({\bf v})$ are of the same type.
\end{proof}

\begin{proof}[Proof of Theorem \ref{main}]  Let $\tau$ be the equivalence relation on $\mathfrak A^+$ defined by ${\bf u} \tau {\bf v}$ if $\bf u$ and $\bf v$ are of the same type.
 First, notice that the words  ${\bf U}_n$ and  ${\bf V}_n$ are not of the same type. Indeed, ${\bf U}_n$
contains $x_{n} x_{n-1}$ as a subword but ${\bf V}_n$ does not have this subword.

Now let  ${\bf U}$ be of the same type as ${\bf U}_n$.
Let ${\bf u} \approx {\bf v}$ be an identity of $M$ in less than $n-1$ variables  and let
 $\Theta: \mathfrak A \rightarrow \mathfrak A^+$  be a substitution such that $\Theta({\bf u}) = {\bf U}$.
Notice that $E({\bf u})$ also involves less than $n-1$ variables and $E({\bf u}) \approx E({\bf v})$ is also an identity of $M$.

Due to Property (P1), for each  $1\le i < j \le n$ the height of  ${\bf U}(x_i, x_j)$ is at most $4$.
 So, by Lemma \ref{prop1}, $E({\bf u})$ satisfies Condition (I)  in Lemma  \ref{manylet}, that is, for each  $\{x,y\}\subseteq  \con(E({\bf u}))$ the height of $E({\bf u}(x,y))$ is at most 4. Also,   $E({\bf u})$ satisfies Conditions (II)--(IV)  in Lemma  \ref{manylet} by Lemmas \ref{propii}--\ref{prop2}.

 Therefore, Lemma \ref{manylet} implies that the word $E({\bf v})$ is  of the same type as $E({\bf u})$.
Due to Property (P3), for each $1 \le i \ne j \le n$ the word $x_i x_j$ appears at most twice in $\bf U$ as a scattered subword.
Consequently, the word $E({\bf u})$ and the substitution $\Theta$ satisfy Condition (*) in  Lemma \ref{redef}. According to Fact 3.1 in \cite{OS}, Property (C$_4$) implies that the word  $x^2$ is an isoterm for $M$. Thus
all other conditions of Lemma  \ref{redef} are also met.
Therefore, the word $\Theta(E({\bf v}))$ has the same type as  $\Theta(E({\bf u}))$ by Lemma \ref{redef}.
Thus we have
\[ {\bf U} = \Theta({\bf u}) \stackrel{Fact \ref{Eu}}{\tau} \Theta (E ({\bf u})) \stackrel{Lemma \ref{redef}} {\tau} \Theta (E ({\bf v})) \stackrel{Fact \ref{Eu}}{\tau} \Theta({\bf v}).\]
Since $\Theta({\bf v})$ is of the same type as $\bf U$, $M$ is NFB by Lemma \ref{nfblemma}.
\end{proof}

\section{Syntactic version of the sufficient condition of Lee for semigroups}\label{sect:sufconLee}

The following sufficient condition implies that for each $\ell \ge 3$ the semigroup $L_\ell$ is NFB \cite{EL1}.

\begin{fact} \label{EL61} \cite[Theorem 11]{EL} Fix $k \ge 2$.
 Let $S$ be a semigroup such that $\var S$ contains $L_3$.
If for each $n \ge 2$, $S$ satisfies the identity

\[x^ky^k_1y^k_2 \dots y^k_nx^k  \approx  x^ky^k_ny^k_{n-1} \dots y^k_1x^k\]

 then $S$ is NFB.
\end{fact}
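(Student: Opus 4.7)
The plan is to establish the syntactic counterpart of Lee's condition (Sufficient Condition \ref{EL}, to be restated as Theorem \ref{EL62} in this section) and then deduce Fact \ref{EL61} from it by means of Fact \ref{final}, which gives $\var S \supseteq L_3$ if and only if $S$ satisfies Property (C$_3$). The argument closely parallels the proof of Theorem \ref{main} in Section \ref{sec:thm}, with Property (C$_4$) replaced by Property (C$_3$) and with the target words now ${\bf U}_n = x^k y_1^k y_2^k \cdots y_n^k x^k$ and ${\bf V}_n = x^k y_n^k y_{n-1}^k \cdots y_1^k x^k$ in place of the words used there.

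I would apply Lemma \ref{nfblemma} with $\tau$ the ``same type'' relation on $\mathfrak A^+$. The words ${\bf U}_n$ and ${\bf V}_n$ are not $\tau$-related, since the scattered subword $y_1 y_2$ occurs in ${\bf U}_n$ but not in ${\bf V}_n$. The remaining task is to show that for every identity ${\bf u} \approx {\bf v}$ of $S$ in fewer than $n-1$ variables, every word ${\bf U}$ of the same type as ${\bf U}_n$, and every substitution $\Theta$ with $\Theta({\bf u}) = {\bf U}$, the word $\Theta({\bf v})$ has the same type as ${\bf U}$. The structural properties of ${\bf U}_n$ that drive this are: every pairwise deletion ${\bf U}_n(a,b)$ has height at most $3$; each $y_i$ forms a single island while $x$ forms exactly two islands with the whole of $y_1^k \cdots y_n^k$ between them; for $1 \le i < j \le n$ the scattered subword $y_j y_i$ does not appear at all and $y_i y_j$ appears exactly once; the entire word ${\bf U}_n$ is a single block.

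Next, I would prove analogues of Lemmas \ref{2let}--\ref{manylet} for semigroups satisfying Property (C$_3$): a word $\bf u$ whose pairwise deletions all have height at most $3$ and whose blocks avoid an explicit short list of forbidden patterns (for instance no block of $\bf u$ deletes to $x^+ y^+ x^+ y^+$ or to $x^+ y^+ x^+ z^+$) can form an identity of $S$ only with a word of the same type. Then I would prove substitution lemmas analogous to Lemmas \ref{prop1}--\ref{prop2}: if $\Theta({\bf u}) = {\bf U}$ with $\bf u$ in fewer than $n-1$ variables and $\Theta$ satisfying property (ii) of Fact \ref{Eu}, then $\bf u$ satisfies those structural hypotheses. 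The heart of this step is a pigeonhole argument: $\bf u$ does not have enough variables to supply all $n-1$ transitions $y_i^k y_{i+1}^k$ internally, so at least one of these transitions must occur inside some $\Theta(t)$ with $t$ linear in $\bf u$, and this linear separator prevents the forbidden block patterns. Finally, Lemma \ref{redef} transfers verbatim to this setting (the only background fact it uses is that $x^2$ is an isoterm for $S$, which follows from Property (C$_3$)), so the chain ${\bf U} = \Theta({\bf u}) \tau \Theta(E({\bf u})) \tau \Theta(E({\bf v})) \tau \Theta({\bf v})$ gives exactly what Lemma \ref{nfblemma} requires.

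The main obstacle will be identifying the precise list of forbidden block patterns in the two-variable and three-variable structural lemmas. Property (C$_3$) supplies strictly weaker isoterm information than (C$_4$), so certain blocks of height $3$ that were handled automatically by Lemma \ref{2let} in the monoid setting must now be excluded by hand, and the verification that preimages under $\Theta$ avoid them will be slightly more delicate. On the other hand, ${\bf U}_n$ has only two $x$-islands (not three as in the monoid case) and no interior interleaving of distinct $y_i$'s, which should shorten the list in the analogue of Lemma \ref{manylet} and simplify the case analysis in the analogues of Lemmas \ref{propii}--\ref{prop2}; the semigroup (rather than monoid) setting is not itself an obstacle, since Lemma \ref{nfblemma} is already stated for semigroups.
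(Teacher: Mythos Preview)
Your overall framework—applying Lemma \ref{nfblemma} with $\tau$ the same-type relation, passing to $E({\bf u})$ via Fact \ref{Eu}, and finishing with a transfer lemma—matches the paper. But the paper does \emph{not} attempt to port the block-analysis machinery of Lemmas \ref{2let}--\ref{manylet} to the (C$_3$) semigroup setting. Instead it proves two short, tailored structural lemmas: Lemma \ref{sem2} (if every variable of $\bf u$ forms a single island, then $\bf u$ is a $\tau$-term for $S$) and Lemma \ref{sem3} (if $\bf u$ begins and ends with $x$, $x$ forms exactly two islands, every other variable forms one island, and $\bf u$ contains a linear letter, then $\bf u$ is a $\tau$-term). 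Because $\Theta(E({\bf u}))$ has the same type as $x^+y_1^+\cdots y_n^+x^+$, the word $E({\bf u})$ automatically falls into one of these two cases, and a pigeonhole count (fewer than $n$ variables in $E({\bf u})$ covering $n+1$ in ${\bf U}_n$) supplies the required linear letter in the second case. The transfer step uses the simpler Lemma \ref{redef1} (only linear vs.\ non-linear), not Lemma \ref{redef}.

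There is also a genuine obstacle in your plan that you underestimate. Lemmas \ref{2let}, \ref{3letter} and \ref{manylet} are proved by \emph{deleting} variables: from $M\models{\bf u}\approx{\bf v}$ one infers $M\models{\bf u}(x,y)\approx{\bf v}(x,y)$ by sending the remaining variables to $1$. In a semigroup without identity this move is unavailable, so those proofs do not carry over with minor edits; your remark that the semigroup setting ``is not itself an obstacle'' is correct for Lemma \ref{nfblemma} but wrong for the block lemmas you intend to adapt. The paper's Lemmas \ref{sem2}--\ref{sem3} sidestep this by using \emph{collapsing} substitutions into $\{x,y\}^+$ (e.g.\ ``send $a$ to $y$ and every other letter to $x$''), which are legal in any semigroup and are exactly what Property (C$_3$) controls. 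If you wanted to salvage your route you would have to rebuild the two- and three-variable block analysis with collapsing substitutions in place of deletions; the paper's approach shows that for these particular ${\bf U}_n$ no such general machinery is needed.
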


In view of Fact \ref{final},  the following sufficient condition is a slight generalization of  Fact  \ref{EL61}.

\begin{theorem} \label{EL62}
 Let $S$ be a semigroup that satisfies Property (C$_3$).
If for infinitely many  $n\ge 2$, $S$ satisfies the identity

\[{\bf U}_n= x^ky^k_1y^k_2 \dots y^k_nx^k  \approx  x^ky^k_ny^k_{n-1} \dots y^k_1x^k ={\bf V}_n\]

for some $k \ge 2$, then $S$ is NFB.
\end{theorem}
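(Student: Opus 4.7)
The plan is to apply Lemma \ref{nfblemma} with $\tau$ the same-type equivalence on $\mathfrak A^+$, following the template of the proof of Theorem \ref{main} with Property (C$_3$) replacing (C$_4$) and the simpler word ${\bf U}_n = x^k y_1^k \cdots y_n^k x^k$ in place of the one there. First, ${\bf U}_n$ and ${\bf V}_n$ are visibly not of the same type, since the $y_i$-blocks appear in opposite orders. So I only need to verify: for every identity ${\bf u} \approx {\bf v}$ of $S$ in fewer than $n-1$ variables, every word ${\bf U} \tau {\bf U}_n$, and every substitution $\Theta$ with $\Theta({\bf u}) = {\bf U}$, one has $\Theta({\bf v}) \tau {\bf U}$. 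Invoking Fact \ref{Eu}, I would replace ${\bf u}$ by $E({\bf u})$, preserving the identity $E({\bf u}) \approx E({\bf v})$ of $S$ and ensuring that distinct variables of $E({\bf u})$ are sent to powers of distinct letters.

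Two structural facts about ${\bf U}_n$ --- hence about any ${\bf U} \tau {\bf U}_n$ --- are the key input: (a) every two-variable projection has height at most $3$, namely ${\bf U}(x, y_i) = x^+ y_i^+ x^+$ or ${\bf U}(y_i, y_j) = y_i^+ y_j^+$; and (b) every contiguous substring of ${\bf U}$ that uses more than one distinct letter occurs in ${\bf U}$ at most once, because the consecutive letter-change pairs $x \to y_1,\ y_1 \to y_2,\ \dots,\ y_n \to x$ are pairwise distinct, which uniquely places any substring containing a letter change. An argument parallel to Lemma \ref{prop1} then shows that every pair projection of $E({\bf u})$ has height at most $3$.

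The main step, and principal obstacle, is to establish the analog of Lemma \ref{manylet} for semigroups satisfying (C$_3$): if every pair projection of a word ${\bf w}$ has height at most $3$, then ${\bf w}$ can form an identity of $S$ only with a word of the same type. This rests on the analog of Fact \ref{basic} for such semigroups (preservation of $\lin$, $\non$, and the content of each block under identities), which can be proved directly by testing substitutions into two-letter words of height at most $3$ such as $y^+ x y^+$ and $x^+ y x^+$, all of which are $\tau$-terms for $S$ by (C$_3$); no identity element of $S$ is required for this. The block-level case analysis is then simpler than in Lemma \ref{manylet}, since the height-$3$ restriction on pair projections already excludes the three-variable block shapes $x^+ y^+ x^+ z^+$ and $x^+ y^+ z^+ x^+$ handled by Conditions (III)-(IV) there --- one of their two-variable projections has height $4$. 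Combining the two-variable base case (the (C$_3$) analog of Lemma \ref{2let}) with Lemma \ref{3let} yields that $E({\bf v})$ is of the same type as $E({\bf u})$.

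Finally, fact (b) forces $\Theta(c)$ to be a power of a single letter whenever $c$ is non-linear in $E({\bf u})$: if $\Theta(c)$ contained a letter change then it would occur in ${\bf U}$ at most once, contradicting that $c$ occurs at least twice in $E({\bf u})$. Together with $E({\bf u}) \tau E({\bf v})$ and Fact \ref{Eu}(iii), this stronger form of Condition $(*)$ in Lemma \ref{redef} yields $\Theta(E({\bf u})) \tau \Theta(E({\bf v}))$. The chain
\[
{\bf U} = \Theta({\bf u}) \,\tau\, \Theta(E({\bf u})) \,\tau\, \Theta(E({\bf v})) \,\tau\, \Theta({\bf v})
\]
then verifies the remaining hypothesis of Lemma \ref{nfblemma}, proving that $S$ is NFB.
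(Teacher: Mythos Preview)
Your overall architecture---Lemma \ref{nfblemma} with the same-type relation $\tau$, passage to $E({\bf u})$ via Fact \ref{Eu}, and a \ref{redef}-style transfer at the end---matches the paper's, and your structural observations (a) and (b) about ${\bf U}_n$ are exactly what is needed. The problem lies in your ``main step'': the general lemma you propose, that any word ${\bf w}$ whose pair projections all have height at most $3$ is a $\tau$-term for every semigroup satisfying (C$_3$), is \emph{false}. The word ${\bf U}_n$ itself is a counterexample: each of its pair projections ${\bf U}_n(x,y_i)=x^+y_i^+x^+$ and ${\bf U}_n(y_i,y_j)=y_i^+y_j^+$ has height at most $3$, yet any $S$ meeting the theorem's hypotheses satisfies ${\bf U}_n \approx {\bf V}_n$ with ${\bf V}_n$ not of the same type. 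No block analysis can establish that lemma as stated; in particular, ${\bf U}_n$ has no linear letters, so it is a single block and the ``analog of Fact \ref{basic}'' gives only $\con({\bf U}_n)=\con({\bf V}_n)$.

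What actually rescues the argument is the bound on the number of variables, which forces $E({\bf u})$ to contain a linear letter in the nontrivial case. The paper exploits this directly, splitting into two concrete shapes tailored to ${\bf U}_n$: either every variable of $E({\bf u})$ forms a single island (Lemma \ref{sem2}), or $E({\bf u})=c^+{\bf a}\,t\,{\bf b}\,c^+$ with $t$ linear and every other variable forming one island (Lemma \ref{sem3}). Both lemmas are proved entirely by identifications into a two-letter alphabet, never by deletion; this matters because your appeal to the (C$_3$) analog of Lemma \ref{2let} together with Lemma \ref{3let} implicitly passes from $S\models {\bf u}\approx {\bf v}$ to $S\models {\bf u}(x,y,z)\approx {\bf v}(x,y,z)$, which is unavailable when $S$ is merely a semigroup. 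So the middle step needs to be reworked along the lines of Lemmas \ref{sem2}--\ref{sem3}, using the linear letter supplied by the variable count rather than a general height-$3$ criterion.
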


The goal of this section is to prove Theorem \ref{EL62} directly using Lemma \ref{nfblemma}.
To this aim, we establish some consequences of Property (C$_3$) for semigroups.

\begin{lemma} \label{sem1} Let $S$ be a semigroup that satisfies Property (C$_3$).
If $S \models {\bf u} \approx {\bf v}$ then
$\con({\bf u}) = \con({\bf v})$ and  $\lin({\bf u}) = \lin({\bf v})$.

\end{lemma}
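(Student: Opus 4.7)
The plan is to establish the two claims separately via carefully chosen substitutions that reduce each situation to the two-letter setting of Property (C$_3$). In each case, the substitution $\Theta$ will send one distinguished variable to $x$ and every other variable (from $\con({\bf u}) \cup \con({\bf v})$) to a fresh second letter $y$, yielding words in $\{x,y\}^+$; then I will apply Property (C$_3$) to whichever side has height at most $3$.

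For the inclusion $\con({\bf u}) \subseteq \con({\bf v})$, I will argue by contradiction: suppose some variable $x$ occurs in ${\bf u}$ but not in ${\bf v}$. Applying the substitution $\Theta$ described above, the identity ${\bf u} \approx {\bf v}$ gives an identity $\Theta({\bf u}) \approx \Theta({\bf v})$ of $S$, in which $\Theta({\bf v}) = y^{|{\bf v}|}$ is a power of $y$ (and hence has height $1$ in $\{x,y\}^+$), while $\Theta({\bf u})$ contains at least one occurrence of $x$. Property (C$_3$) applied to the short word $\Theta({\bf v})$ forces $\Theta({\bf u})$ to be of the same type as $\Theta({\bf v})$, contradicting the presence of $x$. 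Swapping the roles of ${\bf u}$ and ${\bf v}$ then gives $\con({\bf u}) = \con({\bf v})$.

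For the identity $\lin({\bf u}) = \lin({\bf v})$, having established $\con({\bf u}) = \con({\bf v})$, I will again proceed by contradiction: suppose some $x$ is linear in ${\bf u}$ but not in ${\bf v}$, so $x$ occurs exactly once in ${\bf u}$ and, since $x \in \con({\bf v})$, at least twice in ${\bf v}$. The same substitution $\Theta$ produces $\Theta({\bf u})$ of the form $y^a x y^b$ with $a, b \ge 0$, which has height at most $3$ in $\{x,y\}^+$. Property (C$_3$) applied to $\Theta({\bf u})$ then forces $\Theta({\bf v})$ to be of the same type as $\Theta({\bf u})$, and in particular to contain $x$ exactly once; but $\Theta({\bf v})$ inherits at least two occurrences of $x$ from ${\bf v}$, a contradiction.

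There is no real obstacle here: the only point requiring attention is applying Property (C$_3$) to the side of the identity whose height in $\{x,y\}^+$ is guaranteed to be at most $3$, namely $\Theta({\bf v})$ in the first reduction and $\Theta({\bf u})$ in the second.
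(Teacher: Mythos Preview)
Your argument for $\con({\bf u}) = \con({\bf v})$ is correct and essentially matches the paper's approach.

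However, your argument for $\lin({\bf u}) = \lin({\bf v})$ has a genuine gap. You claim that if $\Theta({\bf v})$ has the same type as $\Theta({\bf u}) = y^a x y^b$, then $\Theta({\bf v})$ must ``contain $x$ exactly once''. This does not follow: by the paper's definition, two words have the same type when one is obtained from the other by changing individual exponents, so any word $y^{a'} x^{c} y^{b'}$ with $c \ge 1$ has the same type as $y^a x y^b$. Concretely, take ${\bf u} = xy$ and ${\bf v} = x^2 y$; your substitution $\Theta$ leaves both words unchanged, and $xy$ and $x^2 y$ are of the same type, so Property~(C$_3$) yields no contradiction. (The identity $xy \approx x^2 y$ does fail in every semigroup with Property~(C$_3$), but your substitution does not witness this.)

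The paper repairs exactly this issue by substituting $x \mapsto xy$ rather than $x \mapsto x$ (and $y$ for all other variables). Then the image of ${\bf u}$ still has the form $y^c x y^d$ of height at most~$3$, while each of the $\ge 2$ occurrences of $x$ in ${\bf v}$ produces a separate island of $x$ in the image of ${\bf v}$ (since each is followed by a $y$), forcing height at least~$4$ and giving the desired contradiction with Property~(C$_3$). Inserting this extra $y$ is the missing idea in your approach.
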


\begin{proof} If $x \in \con({\bf u})$ but  $x \not \in \con({\bf v})$ then for some $y \in \con({\bf v})$ and $r>0$
the identity ${\bf u} \approx {\bf v}$ implies $y^r \approx {\bf w}$ such that the height of ${\bf w} \in \{x,y\}^+$ is at least 2.
To avoid a contradiction to Property (C$_1$) we conclude that $\con({\bf u}) = \con({\bf v})$.

If $x$ is linear in $\bf u$ but appears at least twice in $\bf v$ then substitute $xy$ for $x$ and $y$ for all other variables.
Then for some $c+d >0$ the identity ${\bf u} \approx {\bf v}$ implies $y^c x y^d \approx {\bf w}$ such that the height of ${\bf w} \in \{x,y\}^+$ is at least 4. To avoid a
contradiction to Property (C$_3$) we conclude that $\lin({\bf u}) = \lin({\bf v})$.
\end{proof}

\begin{lemma} \label{sem2} Let $S$ be a semigroup that satisfies Property (C$_3$).
If each variable forms only one island in a word $\bf u$ then $\bf u$ can form an identity of $S$ only with a word of the same type.
\end{lemma}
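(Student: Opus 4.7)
The plan is to use two-variable substitutions to reduce to situations where Property~(C$_3$) applies directly. Since each variable of ${\bf u}$ forms a single island, we may write ${\bf u} = x_1^{a_1}x_2^{a_2}\dots x_r^{a_r}$, where $x_1,\dots,x_r$ are pairwise distinct variables and each $a_k \ge 1$. Suppose $S\models {\bf u}\approx {\bf v}$. By Lemma~\ref{sem1}, $\con({\bf v}) = \{x_1,\dots,x_r\}$, so it suffices to prove that ${\bf v} = x_1^{b_1}\dots x_r^{b_r}$ for some positive integers $b_k$, since this would make ${\bf v}$ automatically of the same type as ${\bf u}$.

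For each $i \in \{1,\dots,r-1\}$ I would introduce the ``cut'' substitution $\Theta_i\colon \mathfrak A \to \{x,y\}^+$ defined by $\Theta_i(x_k) = x$ for $k \le i$ and $\Theta_i(x_k) = y$ for $k > i$ (and by, say, $\Theta_i(t)=y$ for any $t\notin\{x_1,\dots,x_r\}$, which we never actually use thanks to Lemma~\ref{sem1}). Since identities are closed under substitution, $S$ also satisfies $\Theta_i({\bf u}) \approx \Theta_i({\bf v})$. Here $\Theta_i({\bf u}) = x^{a_1+\dots+a_i}y^{a_{i+1}+\dots+a_r}$ is a word in $\{x,y\}^+$ of height $2\le 3$, so Property~(C$_3$) forces $\Theta_i({\bf v})$ to have the same type, i.e.~$\Theta_i({\bf v}) = x^{\alpha_i}y^{\beta_i}$ for some $\alpha_i,\beta_i\ge 1$. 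Translated back, this says that in ${\bf v}$ every occurrence of a variable from $\{x_1,\dots,x_i\}$ must precede every occurrence of a variable from $\{x_{i+1},\dots,x_r\}$.

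Combining these constraints as $i$ ranges over $1,\dots,r-1$ forces the letters of ${\bf v}$ to appear in weakly increasing order of index: if two positions $p<q$ in ${\bf v}$ carried $x_k$ and $x_l$ with $k>l$, the cut $\Theta_l$ would be violated. Hence ${\bf v} = x_1^{b_1}x_2^{b_2}\dots x_r^{b_r}$, and each $b_k \ge 1$ again by Lemma~\ref{sem1}. In particular each variable forms a single island in ${\bf v}$, the islands appear in the same order as in ${\bf u}$, and ${\bf v}$ is of the same type as ${\bf u}$.

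I anticipate no real obstacle. The only point to watch is that a single family of two-variable cut substitutions is simultaneously strong enough to rule out extra islands in ${\bf v}$ and to lock down the relative order of the existing ones; indeed each $\Theta_i({\bf u})$ has height exactly $2$, so only the weaker (C$_2$) portion of (C$_3$) is really being exercised, which fits the intuition that a word whose every two-variable projection has height at most $2$ should be rigid under identities of~$S$.
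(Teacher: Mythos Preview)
Your argument is correct and in fact slightly slicker than the paper's. The paper proceeds in three stages: it first shows ${\bf v}$ begins with $x_1$ via the substitution $x_1\mapsto x$, all others $\mapsto y$; then it proves separately that each variable forms a single island in ${\bf v}$ by sending one variable to $y$ and the rest to $x$ (when the chosen variable is interior, ${\bf u}$ becomes $x^+y^+x^+$, so the full height-$3$ strength of (C$_3$) is invoked here); and finally it pins down the order of the islands by sending each consecutive pair $x_i,x_{i+1}$ to $y$ and the rest to $x$, arguing that ${\bf v}$ must contain $x_ix_{i+1}$ or $x_{i+1}x_i$ as a subword and then chaining from the known first letter. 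Your single family of cut substitutions $\Theta_i$ accomplishes all three steps simultaneously, and as you observed, each $\Theta_i({\bf u})$ has height exactly~$2$, so beyond the content part of Lemma~\ref{sem1} you are really only using (C$_2$). The paper's route isolates ``no extra islands'' as an explicit intermediate fact, while yours exploits the total order on indices more directly; either is adequate here.
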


\begin{proof}
   Since each variable forms only one island in the word $\bf{u}$ we may assume that ${\bf u}=x^+_1x^+_2\ldots x^+_r$ for some distinct variables $x_1,x_2, \ldots, x_r$. Suppose that $\bf{u}$ forms an identity of $S$ with a word $\bf{v}$. By Lemma \ref{sem1}, $\con(\bf{u})=\con(\bf{v})$. Note that if we replace $x_1$ by $x$ and any other letter by $y$, then the word $\bf{u}$ turns into $x^+y^+$.
   Since $S$ satisfies Property (C$_2$) the word $\bf{v}$ also starts with $x_1$.

   First, let us prove that  each variable forms only one island in the word $\bf{v}$. Suppose the contrary, there is a letter $a\in \con(\bf{u})=\con(\bf{v})$ that forms at least two islands in $\bf{v}$.  We replace $a$ by $y$ and other letters by $x$. Thus, the identity $\bf{u}\approx \bf{v}$ implies $\bf{u}'\approx \bf{v}'$.
   Notice that the height of ${\bf u'} \in \{x,y\}^+$ is at most $3$ and that $y$ forms only one island in $\bf u'$. On the other hand,
   $y$ forms at least two islands in $\bf v'$. Since $S$ satisfies Property (C$_3$), this is impossible.

  Second, let us take two consecutive letters $x_i, x_{i+1}\in\con(\bf{u})$ and replace them by $y$, while other letters by $x$. Note that the word $\bf{u}$ transforms into a word ${\bf u}'\in\{x,y\}^+$ of height at most $3$ with one island of $y$. Therefore, by Property (C$_3$) the corresponding word $\bf{v}'$ also has one island of $y$ implying the word $\bf{v}$ has  either $x_ix_{i+1}$ or $x_{i+1}x_i$ as a  subword. Since $\bf{u}$ and $\bf{v}$ start with the same letter $x_1$ we conclude that $\bf{u}$ and $\bf{v}$ have the same type.
\end{proof}

\begin{lemma} \label{sem3} Let $S$ be a semigroup that satisfies Property (C$_3$). Let $\bf u$ be a word
that begins and ends with $x$ such that $x$ forms exactly two islands in $\bf u$.
Suppose also that $\bf u$ contains a linear letter and that each variable other than $x$ forms only one island in $\bf u$.
Then $\bf u$ can form an identity of $S$ only with a word of the same type.
\end{lemma}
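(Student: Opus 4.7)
The plan is to write $\mathbf{u} = x^a y_1^{n_1} y_2^{n_2} \cdots y_r^{n_r} x^b$, where $y_1, \dots, y_r$ are the distinct non-$x$ variables in their order of appearance in $\mathbf{u}$ and $n_\ell = 1$ for some $\ell$; then, assuming $S \models \mathbf{u} \approx \mathbf{v}$, I would pin down the shape of $\mathbf{v}$ by a sequence of two-variable substitutions, each time using Property (C$_3$) to transport structural information from $\mathbf{u}$ to $\mathbf{v}$.

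First I would invoke Lemma \ref{sem1} to get $\con(\mathbf{v}) = \con(\mathbf{u})$ and $\lin(\mathbf{v}) = \lin(\mathbf{u})$. Next, applying the substitution $x \mapsto x$, $y_j \mapsto y$ (all $j$) to both sides, the image of $\mathbf{u}$ has type $x^+ y^+ x^+$, so (C$_3$) forces the image of $\mathbf{v}$ to have the same type; hence $\mathbf{v} = x^{a'} \mathbf{w}' x^{b'}$ with $x \notin \con(\mathbf{w}')$. The substitution $y_i \mapsto y$ and everything else $\mapsto x$ similarly forces each $y_i$ to form exactly one island in $\mathbf{w}'$. For each $1 \leq k < r$ the substitution $y_j \mapsto y$ for $j \leq k$ and everything else $\mapsto x$ produces a $\mathbf{u}$-image of type $x^+ y^+ x^+$, so by (C$_3$) the set $\{y_1, \dots, y_k\}$ occupies a contiguous segment of $\mathbf{w}'$; combined with the dual statement for $\{y_{k+1}, \dots, y_r\}$, an easy induction on $r$ forces $\mathbf{w}'$ to list the $y_j$'s either in the original order $y_1 y_2 \cdots y_r$ or in the reverse $y_r y_{r-1} \cdots y_1$.

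The main obstacle will be ruling out the reversed order, and this is where the linear letter $y_\ell$ is essential: the required substitution must use a two-letter image so that the resulting height becomes position-sensitive. Assuming $\ell > 1$, I would take $y_\ell \mapsto xy$, $y_j \mapsto y$ for $j < \ell$, $y_j \mapsto x$ for $j > \ell$, $x \mapsto x$. In $\mathbf{u}$, the letters $y_1, \dots, y_{\ell-1}$ contribute a $y^+$-island to the \emph{left} of $y_\ell$, giving image of type $x^+ y^+ x^+ y^+ x^+$ (height $5$); but in the reversed arrangement these same $y_j$ lie to the \emph{right} of $y_\ell$ in $\mathbf{w}'$ and merge with the trailing $y$ of the image $xy$ into a single $y$-island, giving image of type $x^+ y^+ x^+$ (height $3$). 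Since the image of $\mathbf{v}$ then has height at most $3$, (C$_3$) forces its identity partner --- the image of $\mathbf{u}$ --- to be of the same type, contradicting $5 \neq 3$. When $\ell = 1$ the mirrored substitution $y_1 \mapsto yx$, $y_j \mapsto y$ for $j > 1$, $x \mapsto x$ gives the analogous contradiction, and the sub-case $r = 1$ is immediate from the earlier steps. This eliminates the reverse, leaving $\mathbf{w}'$ in the original order, so $\mathbf{v}$ has the same type as $\mathbf{u}$.
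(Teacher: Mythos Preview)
Your proposal is correct but takes a different route from the paper.  The paper singles out the linear letter $t$ immediately, writes $\mathbf u = x^{+}\mathbf a\, t\, \mathbf b\, x^{+}$ and $\mathbf v = x^{+}\mathbf a'\, t\, \mathbf b'\, x^{+}$, and then uses one uniform substitution scheme: $t\mapsto xy$, an initial segment of the letters of $\mathbf b$ $\mapsto y$, everything else $\mapsto x$.  Each such substitution keeps the image of $\mathbf u$ at height~$3$, so Property~(C$_3$) forces $\mathbf b'$ to agree with $\mathbf b$ letter by letter; the dual argument handles $\mathbf a,\mathbf a'$.  Your argument instead postpones the linear letter: the prefix/suffix substitutions first narrow $\mathbf w'$ down to the forward or the reverse listing of the $y_j$, and only then does $y_\ell\mapsto xy$ (or its mirror) break that symmetry.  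The paper's approach is shorter and sidesteps the induction on~$r$; your approach has the conceptual merit of isolating exactly where the linear letter is needed --- solely to resolve the forward/backward ambiguity that the ``collapse to $y$'' substitutions cannot detect.
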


\begin{proof} Suppose that $S \models {\bf u} \approx {\bf v}$.
If we substitute $y$ for all variables in $\con({\bf u}) = \con({\bf v})$ other than $x$ then $\bf u$ turns into $x^+y^+x^+$.
Since $S$ satisfies Property (C$_3$), this implies that $\bf v$ starts and ends with $x$ and $x$ forms exactly two islands in $\bf v$.
Thus we have ${\bf u} = x^+ {\bf a} t {\bf b} x^+$ and ${\bf v} = x^+ {\bf a'} t {\bf b'} x^+$ where $\con ({\bf ab})= \con ({\bf a'b'}) \cap \{x,t\} =\emptyset$.

We have ${\bf b} = y_1^+ y_2^+ \dots y_r^+$ for some $\{y_1, \dots, y_r\} \subseteq \con ({\bf ab})$.
If $y_1$ is not the first letter in ${\bf b'}$ then substitute $xy$ for $t$, $y$ for $y_1$ and $x$ for all other variables.
Then $\bf u$ turns into $x^+ y^+ x^+$ while $\bf v$ becomes a word that contains at least two islands of $y$. To avoid a contradiction
to Property (C$_3$) we conclude that $\bf b'$ starts with $y_1$. If $\bf b'$ does not have $y_1y_2$ as a subword, then substitute $xy$ for $t$, $y$ for $y_1$ and for $y_2$ and $x$ for all other variables. Then $\bf u$ turns into $x^+ y^+ x^+$ while $\bf v$ becomes a word that contains at least two islands of $y$. To avoid a contradiction
to Property (C$_3$) we conclude that $\bf b'$ starts with $y_1^+y_2$. And so on. Eventually we conclude
that the words $\bf b$ and $\bf b'$ are of the same type.

In a similar way, one can show that $\bf a$ and $\bf a'$ are also of the same type. Consequently, $\bf u$ and $\bf v$ are of the same type.
\end{proof}

Finally, in order to prove Theorem \ref{EL62} we need the following statement similar to Lemma \ref{redef}.

\begin{lemma} \label{redef1} \cite[Lemma 4.1]{OS} Let $\bf u$ and $\bf v$ be two words of the same type such that $\lin({\bf u}) = \lin({\bf v})$ and
$\non({\bf u}) = \non({\bf v})$.

Let  $\Theta: \mathfrak A \rightarrow \mathfrak A^+$
be a substitution that has the following property:

(*) If $\Theta(x)$ contains more than one variable then $x$ is  linear  in $\bf u$.

Then $\Theta({\bf u})$ and  $\Theta({\bf v})$  are also of the same type.

\end{lemma}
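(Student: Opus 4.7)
The plan is to follow the same template as the proof of Lemma \ref{redef}, adapted to the (weaker linearity / stronger substitution) hypotheses here. Since $\bf u$ and $\bf v$ have the same type, I fix a common skeleton for them: there exist letters $c_1, c_2, \dots, c_r$ with $c_i \ne c_{i+1}$ and positive integers $u_1, \dots, u_r, v_1, \dots, v_r$ such that ${\bf u} = c_1^{u_1} c_2^{u_2} \cdots c_r^{u_r}$ and ${\bf v} = c_1^{v_1} c_2^{v_2} \cdots c_r^{v_r}$. Applying $\Theta$ pointwise yields $\Theta({\bf u}) = \Theta(c_1)^{u_1} \Theta(c_2)^{u_2} \cdots \Theta(c_r)^{u_r}$ and similarly for $\Theta({\bf v})$, so it suffices to show these two concatenations have the same sequence of islands.

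Next I would analyze each pair of factors $\Theta(c_i)^{u_i}$ versus $\Theta(c_i)^{v_i}$ in two cases, leaning on both $\lin({\bf u}) = \lin({\bf v})$ and $\non({\bf u}) = \non({\bf v})$. If $c_i \in \lin({\bf u}) = \lin({\bf v})$, then $c_i$ occurs exactly once in each word, so the position $i$ is its only occurrence and $u_i = v_i = 1$; in this case $\Theta(c_i)^{u_i} = \Theta(c_i) = \Theta(c_i)^{v_i}$ literally. If $c_i \in \non({\bf u}) = \non({\bf v})$, then property (*) forces $\Theta(c_i)$ to be a power $x_i^{p_i}$ of a single variable, so $\Theta(c_i)^{u_i} = x_i^{p_i u_i}$ and $\Theta(c_i)^{v_i} = x_i^{p_i v_i}$ are both single islands over the same variable $x_i$, differing only in exponent.

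To finish I concatenate: in $\Theta({\bf u})$ and $\Theta({\bf v})$, the factors are arranged in the same order $1, \dots, r$, and by the case analysis above the first and last letters of $\Theta(c_i)^{u_i}$ and $\Theta(c_i)^{v_i}$ coincide (they are just the first and last letters of $\Theta(c_i)$). Therefore, whether two neighbouring factors merge into a single island at their shared boundary is determined entirely by the fixed words $\Theta(c_i)$ and $\Theta(c_{i+1})$, not by the exponents $u_i, v_i$. So the merging/non-merging pattern is identical in $\Theta({\bf u})$ and $\Theta({\bf v})$, and the two resulting words have the same sequence of island-variables, differing only in exponents; by definition they are of the same type.

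The only delicate point — the one I expect to treat most carefully — is that one must use \emph{both} hypotheses $\lin({\bf u}) = \lin({\bf v})$ and $\non({\bf u}) = \non({\bf v})$ simultaneously. Without them, one could have, say, $c_i$ linear in $\bf u$ but non-linear in $\bf v$: then $u_i = 1$ while $v_i \ge 2$, and $\Theta(c_i)$ could be a genuinely multi-letter word permitted by (*), at which point $\Theta(c_i)^{u_i} = \Theta(c_i)$ and $\Theta(c_i)^{v_i}$ would in general not be of the same type. Recording this rules out the only possible pathology, after which the concatenation argument is a routine verification.
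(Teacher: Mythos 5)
Your proof is correct and follows essentially the same factor-by-factor argument that the paper uses for the sibling Lemma \ref{redef} (the paper itself only cites Lemma \ref{redef1} from \cite{OS} without reproducing its proof); your case analysis is rightly simpler here since condition (*) forces every non-linear variable to have a one-variable image, eliminating the need for anything like Claim \ref{yy}. The concatenation step and the observation that both hypotheses $\lin({\bf u})=\lin({\bf v})$ and $\non({\bf u})=\non({\bf v})$ are needed are handled correctly.
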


\begin{proof}[Proof of Theorem \ref{EL62}]  Let $\tau$ be the equivalence relation on $\mathfrak A^+$ defined by ${\bf u} \tau {\bf v}$ if $\bf u$ and $\bf v$ are of the same type.
 First, notice that the words  ${\bf U}_n$ and  ${\bf V}_n$ are not of the same type. Indeed, ${\bf U}_n$
contains $y_1 y_2$ as a subword but ${\bf V}_n$ does not have this subword.

Now let  ${\bf U}$ be of the same type as ${\bf U}_n$.
Let ${\bf u} \approx {\bf v}$ be an identity of $S$ in less than $n$ variables  and let
 $\Theta: \mathfrak A \rightarrow \mathfrak A^+$  be a substitution such that $\Theta({\bf u}) = {\bf U}$.
Notice that $E({\bf u})$ also involves less than $n$ variables and $E({\bf u}) \approx E({\bf v})$ is also an identity of $S$.

If every variable forms only one island in $E({\bf u})$ then by Lemma \ref{sem2} the word $E({\bf v})$ is of the same type as $E({\bf u})$.
If some variable $x$ forms more than one island in $E({\bf u})$ then in view of Fact \ref{Eu}, $x$ forms exactly two islands in $E({\bf u})$ and
$E({\bf u})$ begins and ends with $x$. Also, $E({\bf u})$ contains a linear letter because it involves less than $n$ variables.
So, in this case, the word $E({\bf v})$ is of the same type as $E({\bf u})$ by Lemma \ref{sem3}.

Therefore, the word $\Theta(E({\bf v}))$ has the same type as  $\Theta(E({\bf u}))$ by Lemma \ref{redef1}.
Thus we have
\[ {\bf U} = \Theta({\bf u}) \stackrel{Fact \ref{Eu}}{\tau} \Theta (E ({\bf u})) \stackrel{Lemma \ref{redef1}} {\tau} \Theta (E ({\bf v})) \stackrel{Fact \ref{Eu}}{\tau} \Theta({\bf v}).\]
Since $\Theta({\bf v})$ is of the same type as $\bf U$, $S$ is NFB by Lemma \ref{nfblemma}.
\end{proof}

\subsection*{Acknowledgement} The authors thank an anonymous referee for helpful comments.


\begin{thebibliography}{123}

\addcontentsline{toc}{section}{{\noindent\bf Bibliography}}



\bibitem{1977} Edmunds,  C. C.: {On certain finitely based varieties of semigroups}. Semigroup Forum {\bf 15}(1),  21--39 (1977)


\bibitem{1980}  Edmunds,  C. C.: {Varieties generated by semigroups of order four}. Semigroup Forum {\bf 21}(1),  67--81 (1980)

\bibitem{MJ} Jackson,  M. G.: {Finiteness properties of varieties and the restriction to finite algebras}. Semigroup Forum {\bf 70}, 159--187
 (2005)



\bibitem{EL} Lee,  E. W. H.: {A sufficient condition for the absence of irredundant bases}. Houston J. Math. {\bf 44}(2), (2018)


\bibitem{EL1}  Lee,  E. W. H.: {On a class of completely join prime J-trivial semigroups
with unique involution}. {Algebra Universalis} {\bf 78}, 131--145 (2017)


\bibitem{RM}  McKenzie,  R. N.: {Tarski's finite basis problem is undecidable}. Internat. J. Algebra Comput. {\bf 6},  49--104  (1996)

\bibitem{P} Perkins, P.: {Bases for equational theories of semigroups}. J. Algebra {\bf 11}, 298--314 (1969)



\bibitem{OS1} Sapir, O. B.: {Non-finitely based monoids}. Semigroup Forum {\bf 90}(3), 557--586  (2015)


\bibitem{OS} Sapir, O. B.: {Lee monoids are non-finitely based while the sets of their isoterms are finitely based}. Bull. Aust. Math. Soc., 
http://dx.doi.org/10.1017/S0004972718000023 (2018)


\bibitem{MV} Volkov,  M. V.: {The finite basis problem for finite semigroups}. Sci. Math. Jpn. {\bf 53}, 171--199  (2001)

\bibitem{WTZ1} Zhang, W. T.,  Luo,  Y. F.: {A new example of non-finitely based semigroups}.  Bull. Aust. Math. Soc. \textbf{84}, 484--491 (2011)


\bibitem{WTZ} Zhang,  W. T.: {Existence of a new limit variety of aperiodic monoids}. Semigroup Forum {\bf 86},  212--220 (2013)



\end{thebibliography}
\end{document}